\documentclass{article}
\usepackage{graphicx,amssymb,amsmath,amsthm,amsfonts,hyperref, tikz-cd, quiver} 
\newcommand{\linearspace}{{linear space}}
\newtheorem{lemma}{Lemma}
\newtheorem{definition}{Definition}
\newtheorem{proposition}{Proposition}

\newtheorem{example}{Example}

\title{Linear Spaces over Perfect Idylls}
\author{Jeffery Liu}
\date{June 25 2026}

\begin{document}
\maketitle
\begin{abstract}
    We construct a category of vector space-like objects called \emph{\linearspace{}s} over perfect idylls $k$ (an algebraic structure generalizing fields and hyperfields), which is a specialization of modules over $k$. Previous authors have noted that naive linear algebra in the product $k^n$ fails because linear independence does not satisfy matroid independence axioms. We give a sufficient axiom so that linear independence in $k$-\linearspace{}s does satisfy matroid independence axioms. We also examine basic categorical properties of $k$-\linearspace{}s. In particular, the category of $k$-\linearspace{}s has no products, explaining the failure of naive linear algebra in $k^n$. Furthermore, we explore the categorical relationship between \linearspace{}s over a perfect idyll, ordinary matroids, and matroids over a perfect idyll, connecting the existing theories of matroids and modules.
\end{abstract}
\section{Introduction}
Idylls (resp. bands) and hyperfields (resp. commutative hyperrings) are algebraic structures generalizing fields (resp. commutative rings). Notable examples are the Krasner hyperfield $\mathbb{K}$, the sign hyperfield $\mathbb{S}$, and the tropical hyperfield $\mathbb{T}$. Modules may be formed over a band or idyll, generalizing modules over a commutative ring or a vector space over a field.

For an arbitrary idyll $k$, naive linear algebra in modules over $k$ (in particular, in the product $k^n$) does not work well in general, because linear independence of its elements does not necessarily satisfy the matroid independence axioms. Instead, other authors have generalized linear algebra over an idyll in alternate ways. In \cite{BakerBowler19}, Baker and Bowler introduced matroids over an idyll $k$, or $k$-matroids, including their axiomatizations by $k$-valued Pl\"ucker functions/coordinates $\subset k^{\binom{n}{r}}$. These generalize the Pl\"ucker embedding of Grassmanian $\mathrm{Gr}(n,r)$ of dimension $r$ linear subspaces of $k^n$ when $k$ is a field. Baker and Bowler also defined the set of vectors, or $k$-vector sets and covectors for a $k$-matroid. These generalize the linear subspaces of $k^n$ when $k$ is a field. In \cite{Anderson19}, Anderson gives an independent equivalent axiomatization for $k$-vector sets.

Over the Krasner hyperfield ($k = \mathbb{K}$), $\mathbb{K}$-matroids are equivalent to ordinary matroids, since the $\mathbb{K}$-Pl\"ucker coordinates reduce to the bases of an ordinary matroid and the $\mathbb{K}$-vector sets reduce to the cycles of the matroid.

The case over the sign hyperfield ($k = \mathbb{S}$), was studied earlier by Bland and Las Vergnas in \cite{BlandVergnas78}, where $\mathbb{S}$-matroids are called oriented matroids. Their $\mathbb{S}$-Pl\"ucker coordinates are also called chirotopes.

The case over the tropical hyperfield ($k = \mathbb{T}$) was studied earlier by Dress and Wenzel in \cite{DressWenzel92}, where $\mathbb{T}$-matroids are called valuated matroids. Their $\mathbb{T}$-Pl\"ucker coordinates are also called basis valuation functions, and their $\mathbb{T}$-vector sets are called tropical linear spaces.

An idyll $k$ is called \emph{perfect} if, for any $k$-matroid, its set of vectors are orthogonal to the set of covectors. Vector sets over a perfect idyll have desirable restriction and contraction properties. The aforementioned idylls, $\mathbb{K}$, $\mathbb{S}$, and $\mathbb{T}$, as well as all fields, are perfect idylls.

Our contribution in this paper is the following. We construct a special class of modules over perfect idylls, called \emph{\linearspace{}s}, for which linear independence \textit{does} satisfy the matroid independence axioms. In particular, we show that the category of simple matroids with pointed strong maps is equivalent to the category of finitely generated \linearspace{}s over the Krasner hyperfield $\mathbb{K}$, and that the category of simple matroids over a perfect idyll $k$ with submonomial matrix maps may be faithfully embedded in the category of \linearspace{}s over $k$. The failure of linear independence in $k^n$ can be attributed to the fact that the category of \linearspace{}s does not have products, which is expected because the category of simple matroids also does not have products.

The paper is organized as follows: 
\begin{itemize}
    \item In Section \ref{sec:matroid_theory}, we recall the basic definitions and properties of matroids and their pointed strong maps.
    
    \item In Section \ref{sec:band_idyll}, we restate the theory of algebraic structures called bands (resp. idylls) and commutative hyperrings (resp. hyperfields), which generalize commutative rings (resp. fields), as it appears in \cite{BakerJinLorscheid25}.
    
    \item In Section \ref{sec:band_module}, we examine the construction of modules over a band (which includes modules over an idyll, commutative hyperring, or hyperfield).

    \item In Section \ref{sec:valuated_matroid}, we survey the current theory of $k$-matroids over an idyll $k$, including their axiomatizations by $k$-Pl\"ucker coordinates and $k$-vector sets.

    \item In Section \ref{sec:proto-exact}, we review a property of categories called proto-exactness which the category of matroids, $k$-matroids, and modules over a band satisfy.

    \item Our main new contributions are in Section \ref{sec:linear_space}, where we construct \emph{\linearspace{}s} over a perfect idyll $k$ as a particular type of modules over $k$. We show that linear independence in these \linearspace{}s satisfies the matroid independence axioms by its construction via $k$-vector sets. We also examine categorical properties of \linearspace{}s. In particular, the category of \linearspace{}s is proto-exact, and has equalizers, kernels, cokernels, coproducts, but no coequalizers and products in general. We also show that the simple matroids with pointed strong maps is equivalent to the category of finitely generated \linearspace{}s over the Krasner hyperfield $\mathbb{K}$, and that the category of simple matroids over a perfect idyll $k$ with submonomial maps can be faithfully embedded into the category of finitely generated \linearspace{}s over $k$.
\end{itemize}
Sections \ref{sec:matroid_theory}-\ref{sec:proto-exact} serve to fix notation, to recall necessary preliminaries, and to collect existing constructions from other sources which formalize linear algebra at this level of generality. Our contribution in Section \ref{sec:linear_space} is to construct, over a perfect idyll $k$, a particular class of objects over $k$...
\begin{itemize}
    \item which generalize vector spaces in a coordinate free way when $k$ is a field,
    \item and for which linear independence of elements satisfies matroid independence axioms.
\end{itemize}
Modules over $k$ do the former but not the latter. Vector sets of matroids over $k$ only partially satisfy both: they generalize linear subspaces of $k^n$ but rely on coordinates, and have a notion of linear independence for the coordinate vectors of $k^n$ but not for all elements. Our construction connects the theory of modules and matroids over $k$ by producing a particular class of modules for which vector arrangements realize all matroids over $k$, and it satisfies both of the above properties.

\section{Matroid Theory}\label{sec:matroid_theory}

A matroid is a combinatorial object which encodes independence and dependence among elements of a finite set. A standard reference is \textit{Theory of Matroids} \cite{White86}.
\begin{definition}\label{def:matroid_independence}
    Let $E$ be a finite set. A \textbf{matroid} on $E$ is characterized by a family of \textbf{independent sets} $\mathcal{I} \subset 2^E$, which satisfy
    \begin{itemize}
        \item $\emptyset \in \mathcal{I}$
        
        \item (\textit{downward-closure}) for $S_1, S_2 \subset E$, if $S_2 \in \mathcal{I}$ and $S_1\subset S_2$, then $S_1 \in \mathcal{I}$
        
        \item (\textit{independence augmentation}) for $S_1, S_2 \in \mathcal{I}$, if $|S_1| > |S_2|$, then there exists $i \in S_1 \setminus S_2$ such that $S_2 \cup \{i\} \in \mathcal{I}$
    \end{itemize}
    Given a matroid $M$ with independent sets $\mathcal{I} \subset 2^E$,
    \begin{itemize}
        \item a subset $S \subset E$ is called \textbf{independent} if $S \in \mathcal{I}$.
        \item a subset is called \textbf{dependent} if and only if it is not independent.
        \item a \textbf{basis} is an inclusion-wise maximal independent set.
        \item a \textbf{circuit} is an inclusion-wise minimal dependent set.
        \item a \textbf{cycle} or \textbf{cyclic set} is a union of circuits.
        \item the \textbf{rank} of a subset $S \subset E$ is the size of the largest independent set which it contains. The rank of the matroid is the rank of the whole set $E$.
        \item a subset is called \textbf{flat} or \textbf{closed} if it is inclusion-wise maximal among subsets with the same rank. The flats of a matroid form a geometric lattice under inclusion.
    \end{itemize}
\end{definition}

\begin{definition}\label{def:matroid_basis}
    A matroid can be equivalently characterized by its set of bases $\mathcal{B} \subset 2^E$, which satisfy
    \begin{itemize}
        \item $\mathcal{B} \neq \emptyset$
        \item (\textit{basis exchange}) for $B_1, B_2 \in \mathcal{B}$, for each $i \in B_1 \setminus B_2$ there exists $j \in B_2 \setminus B_1$ such that $B_1 \setminus\{i\} \cup \{j\} \in \mathcal{B}$
    \end{itemize}
    Then, a subset is independent if it is contained in a basis. In particular, the basis exchange property implies that all bases have the same size, equal to the rank of the matroid.
\end{definition}

\begin{definition}\label{def:matroid_circuit}
    A matroid can be equivalently characterized by its set of circuits $\mathcal{C} \subset 2^E$, which satisfy
    \begin{itemize}
        \item $\emptyset \notin \mathcal{C}$
        \item (\textit{incomparability}) for $C_1, C_2 \in \mathcal{C}$, if $C_1 \subset C_2$, then $C_1 = C_2$
        \item (\textit{circuit elimination}) for $C_1, C_2 \in \mathcal{C}$, with $C_1 \neq C_2$, and for $i \in C_1 \cap C_2$, there exists $C_3 \in \mathcal{C}$ with $C_3 \subset C_1 \cup C_2 \setminus \{i\}$
    \end{itemize}
    Then, a subset is independent if it does not contain a circuit.
\end{definition}

\begin{definition}\label{def:matroid_rank}
    A matroid can be equivalently characterized by its \textbf{rank function} $\mathrm{rk}: 2^E \rightarrow \mathbb{Z}^{\geq 0}$, which satisfies
    \begin{itemize}
        \item for $A \subset E$, $\mathrm{rk}(A) \leq |A|$
        \item (\textit{monotonicity}) for $A, B \subset E$, if $A \subset B$, then $\mathrm{rk}(A) \leq \mathrm{rk}(B)$
        \item (\textit{submodularity}) for $A, B \subset E$, $\mathrm{rk}(A\cup B) + \mathrm{rk}(A \cap B) \leq \mathrm{rk}(A)+ \mathrm{rk}(B)$
    \end{itemize}
    Then, a subset $S$ is independent if it $\mathrm{rk}(S) = |S|$.
\end{definition}

Standard references (c.f. Theorem 2.2.6 in \textit{Theory of Matroids} \cite{White86}) show that definitions in terms of independent sets, bases, and circuits, rank (Definitions \ref{def:matroid_independence},\ref{def:matroid_basis}, \ref{def:matroid_circuit}, and \ref{def:matroid_rank}) are equivalent.

\begin{definition}\label{def:matroid_strong_map}
    Let $\bullet$ be the rank $0$ matroid on $1$ element.
    Given matroids $M_1$, $M_2$ on finite sets $E_1$, $E_2$ respectively, a \textbf{pointed strong map} $f: M_1 \rightarrow M_2$
    is a set function $f: E_1 \sqcup \bullet \rightarrow E_2 \sqcup \bullet$ which sends $\bullet \mapsto \bullet$ satisfying either of the equivalent properties:
    \begin{itemize}
        \item the preimage of any flat in $M_2$ is a flat in $M_1$
        \item for all $A \subset B \subset E_1$, $\mathrm{rk}_{M_2}(f(B)) - \mathrm{rk}_{M_2}(f(A)) \leq \mathrm{rk}_{M_1}(B) - \mathrm{rk}_{M_1}(A)$
    \end{itemize}
    
    Matroids with pointed strong maps form a category which we denote by $\texttt{Matroid}_\bullet$.
\end{definition}

Given an existing matroid, it is possible to construct new matroids by restricting, deleting, or contracting a subset.
\begin{definition}\label{def:matroid_restriction_deletion_contraction}
    Let $M$ be a matroid on finite set $E$, with rank function $\mathrm{rk}$. Given a subset $S \subset E$, 
    \begin{itemize}
        \item the \textbf{restriction} of $M$ to $S$ is a matroid $M_{|S}$ on $S$ whose independent sets are the independent sets of $M$ contained in $S$. It has rank function $\mathrm{rk}_{M_{|S}}(A) = \mathrm{rk}(A)$. The inclusion $S \sqcup \bullet \hookrightarrow E \sqcup \bullet$ is a pointed strong map from the restriction $M_{|S} \rightarrow M$.
        
        \item the \textbf{deletion} of $M$ by $S$ is the restriction to the complement $E \setminus S$.
        
        \item the \textbf{contraction} of $M$ to $S$ is a matroid $M_{/S}$ on the complement $E \setminus S$ whose subsets are independent if their union with every independent subset of $S$ is independent in $M$. It has rank function $\mathrm{rk}_{M_{/S}}(A) = \mathrm{rk}(A \cup S) - \mathrm{rk}(S)$. The surjection $E \sqcup \bullet \hookrightarrow (E\setminus S) \sqcup \bullet$, which maps elements $i \mapsto \bullet$ if $i \in S$, is a pointed strong map to the contraction $M \rightarrow M_{/S}$.
    \end{itemize}
\end{definition}

Given two existing matroids, it is possible to construct a new matroid by combining them by direct sum.
\begin{definition}\label{def:matroid_direct_sum}
    Let $M_1$, $M_2$ be matroids on finite sets $E_1$, $E_2$ respectively. The \textbf{direct sum} $M_1 \oplus M_2$ is a matroid on the disjoint union $E_1 \sqcup E_2$, whose bases are the disjoint union of a basis in $E_1$ and a basis in $E_2$. Or equivalently the circuits are the disjoint union of a circuit in $E_1$ and a circuit in $E_2$. For $i = 1, 2$ The inclusions $E_i \sqcup \bullet \hookrightarrow E_1 \sqcup E_2 \sqcup \bullet$ are pointed strong maps to the direct sum $M_i \rightarrow M_1 \oplus M_2$.
\end{definition}
The direct sum is the coproduct in $\texttt{Matroid}_\bullet$.

Given an existing matroid, one may construct a dual matroid.
\begin{definition}\label{def:matroid_dual}
    Let $M$ be a matroid on finite set $E$. The \textbf{dual matroid} is a matroid $M^*$ on whose bases are complements of the bases on $M$. It has rank function 
    $\mathrm{rk}_{M^*}(A) = \mathrm{rk}_{M}(E \setminus A) + |A| - \mathrm{rk}_{M}(E)$.
\end{definition}

\begin{definition}\label{def:matroid_simple}
    Let $M$ be a matroid on finite set $E$. 
    \begin{itemize}
        \item An element $i \in E$ is called a \textbf{loop} if the singleton $\{i\}$ is circuit.
        
        \item Two elements $i,j \in E$ are called \textbf{parallel} if the tuple $\{i, j\}$ is circuit.
    \end{itemize}
    A matroid with no loops or parallel elements, or equivalently no dependent sets with size $< 3$, is called \textbf{simple}.
    
    Each matroid has unique simplification up to isomorphism by deleting all loops and identifying all elements of the same parallel class.

    We denote the full subcategory in $\texttt{Matroid}_\bullet$ of simple matroids by $\texttt{Matroid}^{\mathrm{simple}}_\bullet$.
    
    Given a surjective pointed set function $f: E_1 \sqcup \bullet \rightarrow E_2 \sqcup \bullet$, and a matroid $M$ on $E_2$, define the pull-back matroid $f^{-1}M$ on $E_1$ which has rank function for $A \subset E_1$,
    $$\mathrm{rk}_{f^{-1}M}(A) = \mathrm{rk}_{M}(f(A)).$$
    
    If $M$ is simple, then each fiber of $f$ is a set of loops or a parallel class. This is because for $i,j \in E_1$, if $f(i) = f(j)$, then $\mathrm{rk}_{f^{-1}M}(ij) = \mathrm{rk}_{f^{-1}M}(f(i)) \leq 1$ so $i$ and $j$ are parallel or both loops. So $M$ is the simplification of $f^{-1}M$ by identifying the fibers.
    
    We define a subset $S \subset E_2$ to be a \textbf{circuit up to simplification by $f$} if $f^{-1}(S)$ is a circuit in $f^{-1}M$.
\end{definition}

\begin{lemma}\label{lem:strong_map_circuits}
    When $M_1$ and $M_2$ are simple matroids, a pointed set function $f: E_1 \sqcup \bullet \rightarrow E_2 \sqcup \bullet$ is a pointed strong map if and only if the image of every circuit in $M_1$ is a union of circuits up to simplification by $f$ in $M_2$.
    \begin{proof}
        Every pointed set function $f: E_1 \sqcup \bullet \rightarrow E_2 \sqcup \bullet$ factors as a surjective map onto the image, followed by an embedding. 
        $$E_1 \sqcup \bullet \rightarrow \mathrm{im}(f) \sqcup \bullet \rightarrow E_2 \sqcup \bullet$$
        Since the restriction $M_2|_{\mathrm{im}(f)} \rightarrow M_2$ preserves all ranks, it is a strong map and preserves cycles. Hence, without loss of generality, it suffices to consider the case where $f$ is surjective.
        
        By pullback along $f$, we can further reduce to the case where $E_1 = E_2$ up to simplification.
        
        By Proposition 8.1.6.(g) in \cite{White86} pointed set functions $f: E_1 \sqcup \bullet \rightarrow E_2 \sqcup \bullet$ with $E_1 = E_2$ are strong maps if and only if the image of each circuit is a union of circuits.
    \end{proof}
\end{lemma}

Matroids generalize linear independence of vectors from linear algebra, from which the terms ``independence", ``dependence", ``basis", and ``rank" are borrowed.
\begin{example}\label{eg:realization}
    Let $k$ be a field, and $V$ be a $k$-vector space. Given a finite collection of vectors $\{v_i\}_{i \in E}$ with $v_i\in V$ (indexed by $E$), the subsets $S \subset E$ which index a linearly independent set of the vectors $\{v_i\}_{i \in S}$ form the independent sets of a matroid. i.e.,
    $$S \in \mathcal{I} :\iff \{v_i\}_{i \in S} \text{ is linearly independent in }V.$$
\end{example}
    
\begin{definition}\label{def:realizable}
    Let $k$ be a field. A matroid is \textbf{$k$-realizable} (or \textbf{$k$-representable}) if there exists a $k$-vector space $V$ and vector arrangement $\{v_i\}_{i \in E} \in V^E$, so that the independent sets of the matroid index the linearly independence of vectors in the arrangement (as described in Example \ref{eg:realization}).
\end{definition}

\begin{example}\label{eg:realizable_matroid}
    Over field $k = \mathbb{R}$, let $\{v_0 , \dots , v_3\} \subset \mathbb{R}^2$ be the four column vectors of the matrix 
    $$\begin{bmatrix}
        1 & 0 & 1 & 2\\
        0 & 1 & 1 & 2
    \end{bmatrix};$$
    this vector arrangement realizes a matroid with 
    \begin{itemize}
        \item independent sets: $0, 1, 2, 3, 01, 02, 03, 12, 13$
        \item bases: $01, 02, 03, 12, 13$
        \item circuits: $012, 013, 23$
        \item flats: $\emptyset, 0, 1, 23, 0123$
    \end{itemize}
\end{example}

There exist matroids which are not realizable over any field. One notable example is the V\'amos matroid on eight elements $E = \{0, \dots, 7\}$ whose bases are $$\mathcal{B}={E\choose 4} \setminus \{0123, 0145, 0167, 2345, 2367\},$$ i.e., all the four element subsets except the five listed circuits.

A natural question which arises: ``Are there algebraic structures generalizing fields and vector spaces in which linear independence is well-defined, and whose ``vector arrangements" realize all matroids?" Such algebraic structures generalizing fields are the topic of the next section.

\section{Algebraic Structures: Bands, Idylls, Hyperrings, Hyperfields}\label{sec:band_idyll}

\begin{definition}[Baker-Jin-Lorscheid, 2025 \cite{BakerJinLorscheid25}]
    A pointed monoid $B$ is a commutative multiplicative monoid with distinguished elements $0,1 \in B$, such that, for all $a \in B$, $0 \cdot a = 0$ and $1 \cdot a = a$.
    $$B^+ := \mathbb{N}[B \setminus \{0\}] = \{\text{formal sums of nonzero elements in $B$}\}$$
    By definition, the formal sums are taken to be commutative. The zero element $0 \in B$ is identified with the empty sum in $B^+$. The set of formal sums $B^+$ is a semigroup under concatenation of sums.
    
    A \textbf{band} consists of a pointed monoid $B$ and a \textbf{null set} $N_B \subset B^+$ which satisfies
    \begin{itemize}
        \item $0 \in N_B$
        \item $N_B + N_B = N_B$
        \item $B \cdot N_B = N_B$
        \item for all $a \in B$, there exists a unique ``additive inverse" $b \in B$ such that $a+b \in N_B$. Denote this by $-a$.
    \end{itemize}
    An \textbf{idyll} is a band for which all nonzero elements have multiplicative inverse.
\end{definition}
All commutative rings $R$ are bands with null set $$N_R = \left\{\sum a_i \in R^+ \mid \sum a_i = 0 \text{ in $R$}\right\}$$ and likewise all fields are idylls.

\begin{definition}[Baker-Jin-Lorscheid, 2025 \cite{BakerJinLorscheid25}]\label{def:band_idyll}
    Given bands $B, C$, a function $f:B \rightarrow C$ is a \textbf{band morphism} if 
    \begin{itemize}
        \item $f(0_B) = 0_C$
        \item $f(1_B) = 1_C$
        \item for all $a,b \in B$, $f(ab) = f(a)f(b)$
        \item for all $\sum a_i \in N_B$, $\sum f(a_i) \in N_C$
    \end{itemize}
    Bands with band morphisms form a category which we denote by $\texttt{Band}$. Idylls form a full subcategory $\texttt{Idyll}$.
\end{definition}

\begin{definition}\label{def:hyperring_hyperfield}
    Given a band $B$, define a (possibly empty or multivalued) binary operation $\boxplus: B \times B \rightarrow 2^B$
    $$x \boxplus y := \{z \in B \mid -z + x + y  \in N_B\}$$
    which extends to subsets $X, Y \subset B$
        $$X \boxplus Y := \bigcup_{x \in X, y \in Y} x \boxplus y$$
    A band $B$ is additionally a \textbf{commutative hyperring} if the operation $\boxplus$ satisfies
    
    \begin{itemize}
        \item for all $x,y \in B$, $x \boxplus y \neq \emptyset$ 
        \item (associativity) for all $x,y,z \in B$, $(x \boxplus y)\boxplus z = x \boxplus (y \boxplus z)$
    \end{itemize}
    A commutative hyperring is a \textbf{hyperfield} if all nonzero elements have a multiplicative inverse.

    We denote their full subcategory in $\texttt{Band}$ by $\texttt{CommHyperring}$ and $\texttt{Hyperfield}$.
\end{definition}
All commutative rings are commutative hyperrings with singleton hyperoperation $a \boxplus b = \{a+b\}$. Likewise all fields are hyperfields. Therefore, we have the following inclusions:

\[\begin{tikzcd}
	{\texttt{Field}} & {\texttt{Hyperfield}} & {\texttt{Idyll}} \\
	{\texttt{CommRing}} & {\texttt{CommHyperring}} & {\texttt{Band}}
	\arrow[hook, from=1-1, to=1-2]
	\arrow[hook, from=1-1, to=2-1]
	\arrow[hook, from=1-2, to=1-3]
	\arrow[hook, from=1-2, to=2-2]
	\arrow[hook, from=1-3, to=2-3]
	\arrow[hook, from=2-1, to=2-2]
	\arrow[hook, from=2-2, to=2-3]
\end{tikzcd}\]

\begin{definition}
    Let $B$ be a band. A subset $I \subseteq B^+$ is a \textbf{null ideal} if it satisfies the following properties
    \begin{itemize}
        \item $0 \in I$
        \item $I + I = I$
        \item $B \cdot I = I$
        \item $N_B \subseteq I$
        \item (substitution rule) for all $a,b,c_i \in B$,\\
        if $a + (-b) \in I$ and $b + \sum c_i \in I$, then $a + \sum c_i \in I$. 
    \end{itemize}
\end{definition}
The null set $N_B$ is a null ideal. Given a subset $S \subseteq B^+$, write $\langle S\rangle$ for the null ideal generated by $S$.
\begin{definition}
    Let $B_1$ and $B_2$ be bands. Given a band morphism $f: B_1 \rightarrow B_2$, the \textbf{nullkernel} is the set
$$\mathrm{nullker}(f) := \left\{\sum a_i \in B_1^+ \mid \sum f(a_i) \in N_{B_2}\right\}$$
\end{definition}
The nullkernel of a band morphism $f: B_1 \rightarrow B_2$ is a null ideal $\subseteq B_1^+$.

\begin{definition}\label{def:nullidealquotient}
    Given a band $B$ and null ideal $I$, define the \textbf{quotient band $B / I := B / \sim$} under the equivalence relation $$(\text{for all } a,b\in B:) \quad a \sim b : \iff a+(-b)\in I$$ with the null set 
    $$N_{B / I} := \left\{\sum[a_i] \mid \text{for }a_i \in B, \text{such that} \sum a_i \in I\right\}$$

    The substitution rule in $I$ assures uniqueness of additive inverses in $N_{B/I}$.
\end{definition}
The quotient map $B \rightarrow B / I$ sending each element to its equivalence class $a \mapsto [a]$ has nullkernel $I$. Given a surjective band morphism $f: B_1 \rightarrow B_2$, there is an isomorphism $B_2 \cong B_1/\mathrm{nullker}(f)$ (Corollary 1.14 in \cite{BakerJinLorscheid25}).

\begin{example}
    The \textbf{Krasner hyperfield} is an idyll $\mathbb{K} = \{0, 1\}$ with null set $$N_{\mathbb{K}} = \{0, 1+1,1+1+1, \cdots\}.$$ 
    It is a hyperfield with hyperoperation
    $$\begin{tabular}{c|c c}
        $\boxplus$ & $0$ & $1$ \\
        \hline
        $0$ & $0$ & $1$\\
        $1$ & $1$ & $\{0, 1\}$
    \end{tabular}$$
    
\end{example}
 The Krasner hyperfield encodes the arithmetic of being zero or nonzero. It is the terminal object in $\texttt{Idyll}$, since for any idyll, there is a unique morphism to $\mathbb{K}$ which sends zero to $0$ and anything nonzero to $1$.

\begin{example}        
    The \textbf{sign hyperfield} is an idyll $\mathbb{S} = \{0, 1, -1\}$
    with null set $$N_{\mathbb{S}} = \{0\} \cup \left\{\sum a_i \in \mathbb{S}^+\mid \text{both $1$ and $-1$ appear among $a_i$'s} \right\}.$$
        
    It is a hyperfield with hyperoperation
    $$\begin{tabular}{c|c c c}
        $\boxplus$ & $0$ & $1$ & $-1$ \\
        \hline
         $0$ & $0$ & $1$ & $-1$\\
         $1$ & $1$ & $1$ & $\{0,1,-1\}$\\
         $-1$ & $-1$ & $\{0,1,-1\}$ & $-1$\\
    \end{tabular}$$
\end{example}
The sign hyperfield encodes the arithmetic of being zero, positive, or negative.

\begin{example}
    The \textbf{tropical hyperfield} is an idyll $\mathbb{T} = \{-\infty\} \cup \mathbb{R}$ with multiplication $x \cdot_{\mathbb{T}} y := x + y$, $0_\mathbb{T} = -\infty$, $1_\mathbb{T} = 0$, and has null set $$N_{\mathbb{T}} = \left\{\sum a_i \in \mathbb{\mathbb{T}}^+\mid \text{the maximum among $a_i$'s appears at least twice}\right\}$$
    It is a hyperfield with hyperoperation
        $$x \boxplus y = 
        \begin{cases}
            \max(x,y) &\text{if $x \neq y$}\\
            [-\infty,x] &\text{if $x = y$}
        \end{cases}$$
\end{example}
The tropical hyperfield encodes the arithmetic of a real valuation on a ring (c.f. the degree of a Puiseux series).

\section{Modules over Bands, Idylls, Hyperrings, Hyperfields}\label{sec:band_module}
In \cite{JarraLorscheidVital26}, Jarra, Lorscheid, Vital introduced modules over a band, analogous to modules over a commutative ring. Despite the name, they differ from ordinary modules in the sense that the underlying structure is not an abelian group, but rather a pointed set with a null set of formal sums.

\begin{definition}
    Let $B$ be a band. Let $V$ be a pointed set with distinguished element $0_V \in V$.
    $$V^+ = \mathbb{N}[V \setminus 0_V] = \{\text{formal sums of nonzero elements of }V\}$$
    As with bands, formal sums are commutative, and $V^+$ is a semigroup and $0_V$ is identified with the empty sum.

    A \textbf{module over $B$}, or \textbf{$B$-module} is a pointed set $V$ equipped with a scalar multiplication $\cdot: B \times V \rightarrow V$ with a null set $N_V \subset V^+$ satisfying
    \begin{itemize}
        \item for $a,b \in B$, $v\in V$: $(a b)\cdot v = a \cdot(b\cdot v)$
        \item for $v \in V$, $1 \cdot v = v$, $0_B \cdot v = 0_V$, and for $a \in B$, $a \cdot 0_V = 0_V$.
        
        \item $0_V \in N_V$
        \item for all $v \in V$, there exists a unique ``additive inverse" $-v \in V$ such that $v + (-v) \in N_V$.
    \end{itemize}
   Scalar multiplication lifts to $B^+ \times V^+ \rightarrow V^+$, by requiring it to be distributive on formal sums: $(a+b)\cdot v := a\cdot v + b \cdot v$ and $a\cdot (v + w) := a\cdot v + a \cdot w$. It must furthermore satisfy properties
    \begin{itemize}
        \item $N_B \cdot V^+ \subset N_V$
        \item $B^+ \cdot N_V \subset N_V$
    \end{itemize}
    Since $v + (-1)\cdot v = (1+(-1))\cdot v \in N_V$, it follows that $(-1)\cdot v = -v$ by uniqueness of opposites.

    Let $V_1, V_2$ be $B$-modules. A \textbf{morphism of $B$-modules} or a $B$-\textbf{linear map} is a map $f: V_1 \rightarrow V_2$ such that
    \begin{itemize}
        \item $f(0) = 0$
        \item for all $a \in k, v \in V_1$, $f(a \cdot v) = a \cdot f(v)$
        \item for all $a_i \in V_1$, 
        if $\sum a_i \in N_{V_1}$ then $\sum f(a_i) \in N_{V_2}$.
    \end{itemize}
    By the third property, $f$ induces a semigroup homomorphism $f^+: V_1^+ \rightarrow V_2^+$, for which $f^+(N_{V_1}) \subseteq N_{V_2}$.

    Modules over a band $B$ and linear maps form a category, which we denote by $\texttt{Module}_B$.
\end{definition}
    
\begin{definition}
    Let $B$ be a band, and let $V$ be a $B$-module.
    A subset $I \subset V^+$ is a \textbf{null ideal} by definition if it satisfies the following properties
    \begin{itemize}
        \item $0 \in I$
        \item $I + I = I$
        \item $B\cdot I = I$
        \item $N_V \subseteq I$
        \item (substitution rule) for all $u,w,v_i \in V$,\\
        if $u + (-w) \in I$ and $w + \sum v_i \in I$, then $u + \sum v_i \in I$. 
    \end{itemize}

    The null set $N_V$ is a null ideal. Given a subset $S \subseteq V^+$, write $\langle S\rangle$ for the null ideal generated by $V$.

    The quotient construction is similar to the quotient in bands. Given a $B$-module $V$ and null ideal $I$, define the \textbf{quotient module $V / I := V / \sim$} under the equivalence relation $$(\text{for all } a,b\in G:) \quad a \sim b : \iff a+(-b)\in I$$ with the null set 
    $$N_{V / I} := \left\{\sum[v_i] \mid \text{for }v_i \in V, \text{such that} \sum v_i \in I\right\}$$
    Since $I$ is stable under the action of $B$, scalar multiplication is well defined on equivalence classes.
\end{definition}
The map sending each element to its equivalence class $V \rightarrow V / I$ is a linear map. Also, given a linear map $f:V\rightarrow W$, the nullkernel $$\mathrm{nullker}(f)= \left\{\sum v_i \in V^+ \mid \sum f(v_i) \in N_W\right\}$$ is a null ideal $\subset V^+$, and there is an isomorphism $W \cong V/\mathrm{nullker}(f)$.

\begin{definition}\label{def:submodule}
    Let $B$ be a band, and let $V$ be a $B$-module. Let $W \subset V$ be a subset which is stable under the scalar multiplication ($B\cdot W = W$). Then, $W$ is also a $B$-module, with null set 
    $N_W = \left\{ \sum v_i \in W^+ \mid \sum v_i \in N_V\right\}$
    The $B$-module $W$ is called a \textbf{strict submodule} of $V$. The inclusion map $W \hookrightarrow V$ is a linear map by construction.
\end{definition}

\begin{definition}\label{def:quotient_by_submodule}
    Let $B$ be a band, and let $V$ be a $B$-module. Let $W \subset V$ be a strict submodule. Then the quotient 
    $V / \langle W \rangle$ is $B$-module. The projection map $V \rightarrow V/\langle W\rangle$ is a linear map.
\end{definition}
Not all null ideals are generated by a strict submodule, and so not all quotient maps are quotients by a strict submodule. Thus, we make the distinction between quotients by a null ideal and by a strict submodule.

In \cite{Hamada26}, Hamada investigated basic categorical properties of $\texttt{Module}_B$. In particular, it has all products, coproducts, equalizers, and coequalizers, and hence is complete and cocomplete (Proposition 4.17 in \cite{Hamada26}).

\begin{example}
    Let $B$ be a band. 
    \begin{itemize}
        \item The single point $\{0\}$ is a trivial $B$-module, with null set $\{0\}$. It is the zero object in $\texttt{Module}_B$.
        \item The band $B$ itself is a $B$-module with scalar multiplication being the usual multiplication.
    \end{itemize}
\end{example}

\begin{definition}
    Let $B$ be a band, and $\{V_i\}_{i \in I}$ be a family of $B$-modules, indexed by $I$. The \textbf{product} is a $B$-module $\prod_{i \in I}V_i$. Its underlying pointed set is the usual Cartesian product with zero $0 := (0_{V_i})_{i \in I}$. Its null set is 
    $$N_{\prod_{i \in I}V_i} := \left\{\sum_j (v_{ij})_{i\in I} \mid \text{for all $i \in I$, } \sum_j v_{ij} \in N_{V_i}\right\}$$
    and scalar multiplication defined component-wise $a\cdot (v_{i})_{i\in I} := (a \cdot v_{i})_{i\in I}$.
\end{definition}
The product $\prod_{i \in I}V_i$ satisfies the universal property of the product in $\texttt{Module}_B$.
\begin{definition}
    Let $B$ be a band, and $\{V_i\}_{i \in I}$ be a family of $B$-modules, indexed by $I$. The \textbf{coproduct} or \textbf{direct sum} is a $B$-module $\bigoplus_{i \in I}V_i$. Its underlying pointed set is the quotient of the disjoint union by identifying zeros
    $\bigoplus_{i \in I}V_i := (\bigsqcup_{i \in I} V_i) / \{\text{for all $i \in I$, } 0 \sim 0_{V_i}\}$ (this is the coproduct as pointed sets). Its null set is 
    $$N_{\bigoplus_{i \in I}V_i} := \langle N_{V_i} \mid i\in I\rangle$$
    and scalar multiplication defined by extending the multiplication on each component $V_i$.
\end{definition}
The direct sum $\bigoplus_{i \in I}V_i$ satisfies the universal property of the coproduct in $\texttt{Module}_B$.
\begin{definition}
    Let $B$ be a band and $f,g:V \rightarrow W$ be linear maps of $B$-modules $V$ and $W$.
    The \textbf{equalizer} is the strict submodule
    $$\mathrm{eq}(f,g) := \{v \in V \mid f(v) = g(v)\} \subset V.$$
    Consequently, the \textbf{kernel} of a linear map $f:V \rightarrow W$ can be constructed as the equalizer of $f$ with the zero map.
    $$\mathrm{ker}(f) := \{v \in V \mid f(v) = 0\}$$
\end{definition}
The equalizer $\mathrm{eq}(f,g)$ satisfies the universal property of the equalizer in $\texttt{Module}_B$. The kernel $\mathrm{ker}(f)$
satisfies the universal property of the kernel in $\texttt{Module}_B$.

\begin{definition}
    Let $B$ be a band and $f,g:V \rightarrow W$ be linear maps of $B$-modules $V$ and $W$.
    The \textbf{coequalizer} is the quotient of $W$ by the null ideal generated by formal differences $f(v) - g(v)$
    $$\mathrm{coeq}(f,g) := W/\langle f(v) - g(v) \mid v\in V \rangle.$$
    Consequently, the \textbf{cokernel} of a linear map $f:V \rightarrow W$ can be constructed as the coequalizer of $f$ with the zero map, which is the same as the quotient of $W$ by the strict submodule image of $f$.
    $$\mathrm{coker}(f) := W/\langle f(v) \mid v\in V \rangle$$
\end{definition}
The coequalizer $\mathrm{coeq}(f,g)$ satisfies the universal property of the coequalizer in $\texttt{Module}_B$. The cokernel $\mathrm{coker}(f)$ satisfies the universal property of the cokernel in $\texttt{Module}_B$.

\begin{definition}
    Let $B$ be a band and let $V$ and $W$ be $B$-modules. The set of linear maps $V \rightarrow W$ is denoted $\mathrm{Hom}(V, W)$.
    The set $\mathrm{Hom}(V, W)$ has the structure of $B$-module when equipped with the null set
    $$N_{\mathrm{Hom}(V, W)} = \left\{\sum f_i \in \mathrm{Hom}(V, W)^+ \mid \text{$\sum f_i(v) \in N_W$ for all $v \in V$}\right\}.$$
    where scalar multiplication by $a \in B$ is defined by 
    $(a \cdot f)(v) := a\cdot f(v)$.
\end{definition}

\section{Matroids over Idylls and Hyperfields}\label{sec:valuated_matroid}

We survey existing constructions for matroids over an idyll $k$, or $k$-matroids. There are distinct notions of strong and weak $k$-matroids. We only consider strong $k$-matroids in this paper. Additionally, we later restrict to the class of perfect idylls $k$, for which strong and weak $k$-matroids coincide.

\begin{definition}[Baker-Lorscheid, 2021 \cite{BakerLorscheid21}]
    \noindent Let $k$ be an idyll. A (strong) \textbf{$k$-valued Pl\"ucker coordinate of rank $r$} is a point $p = (p_S)_{S \in {E \choose r}} \in  k^{E \choose r}$ satisfying the Pl\"ucker relations:
    for $I, J\subset E$, $|I| = r+1, |J| = r-1$, $|I\setminus J| \geq 3$
     
    $$\sum_{i \in I\setminus J} (-1)^{\epsilon(i;I,J)} p_{I\setminus i} p_{J \cup i} \in N_k$$
    where $\epsilon(i;I,J) = |\{j \in I \cup J \mid j < i \}|$, relative to a fixed order on $E$.
\end{definition}

These generalize the usual Pl\"ucker relations for the embedding of the Grassmanian $\mathrm{Gr}_{k}(n,r)$ into  $\mathbb{P}(k^{E \choose r})$, when $k$ is a field.

The support $\mathrm{supp}(p)=\{B \in {E \choose r} \mid p_B \neq 0\}$ is the set of bases for a matroid of rank $r$, called the underlying matroid.

Matroids over $k$ were first defined in terms of Pl\"ucker coordinates. However, in the paper we will exclusively consider an alternate axiomatization in terms of vector sets:

\begin{definition}[Anderson, 2019 \cite{Anderson19}]\label{def:vector_set}
    Let $\mathcal{V} \subset k^E$.
    \begin{itemize}
        \item $B \subset E$ is a \textbf{support basis} for $\mathcal{V}$ if for all nonzero $v \in \mathcal{V}$, $\mathrm{supp}(v) \cap B \neq \emptyset$ and $B$ is minimal with this property.
        \item $v^1, \dots, v^r \subset \mathcal{V}$ is a \textbf{reduced row echelon form (RREF)} for $B$, if the $r \times B$ minor of the matrix whose rows are $v^1, \dots, v^r$ is the identity.
    \end{itemize}
    For $v^1, \dots, v^r \in k^E$ define the span
    \begin{align*}
        \mathrm{span}(v^1, \dots, v^r) = \{ &w \in k^E \mid  \text{there exists } a^1, \dots, a^r \in k,\\ &\text{ for } i \in E, -w_i + a^1v_i^1 + \cdots + a^nv_i^r \in N_k \}
    \end{align*}
    $\mathcal{V}$ is a $k$-\textbf{vector set} if each support basis has a RREF, and
    $$\mathcal{V} = \bigcap_{\text{$B$ is a support basis }}\bigcap_{\text{$v^1, \dots, v^r$ is a RREF for $B$}} \mathrm{span}(v^1, \dots, v^r)$$
\end{definition}
Vector sets generalize the row space of an $|E| \times r$ matrix when $k$ is a field. Over a field, the row space is preserved under elementary row operations, and hence is the span of RREFs in any support basis. Over an idyll, this is not necessarily the case, hence the need to take the intersection over all support bases and RREFs.

Anderson also showed the following properties of $k$-vector sets:
\begin{proposition}[Anderson, 2019 \cite{Anderson19}]
\:\
\begin{itemize}
    \item The RREFs of a $k$-vector set are unique for each support basis (Proposition 2.11 in \cite{Anderson19}).
    \item The support bases of a $k$-vector set satisfy the basis exchange property (Lemma 2.13 in \cite{Anderson19}), and are dual to the bases of the underlying matroid.
    \item There is a one-to-one correspondence between $k$-vector sets and (strong) $k$-Pl\"ucker coordinates (and other axiomatizations of (strong) $k$-matroids) (Theorem 2.18 in \cite{Anderson19}).
\end{itemize}
\end{proposition}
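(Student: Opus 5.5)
The proposition has three parts, and I would prove them in the order stated, since each later part uses the earlier ones.

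\emph{Uniqueness of RREFs.} Let $B=\{b_1,\dots,b_r\}$ be a support basis, and let $v^1,\dots,v^r$ and $w^1,\dots,w^r$ be two RREFs for $B$ in a $k$-vector set $\mathcal{V}$. Each $w^j$ lies in $\mathcal{V}$, so by the defining intersection it lies in $\mathrm{span}(v^1,\dots,v^r)$. Hence there are scalars $a^1,\dots,a^r$ with $-w^j_i+\sum_l a^l v^l_i\in N_k$ for every $i\in E$.
\begin{itemize}
\item At the coordinate $i=b_l$, the RREF condition reduces this to $-\delta_{jl}+a^l\in N_k$. Uniqueness of additive inverses in $k$ then forces $a^l=\delta_{jl}$.
\item At any other coordinate $i\notin B$, the relation becomes $-w^j_i+v^j_i\in N_k$, so again uniqueness of inverses gives $w^j_i=v^j_i$.
\end{itemize}
Thus $w^j=v^j$ for all $j$. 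This step is routine and uses only the band axioms.

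\emph{Basis exchange and duality.} Let $\mathcal{C}^*$ be the set of inclusion-minimal nonempty supports of vectors in $\mathcal{V}$. A set $S$ meets every nonzero support if and only if it meets every member of $\mathcal{C}^*$. That holds if and only if $E\setminus S$ contains no member of $\mathcal{C}^*$. So, once $\mathcal{C}^*$ is shown to be the circuit set of a matroid $M'$ (Definition~\ref{def:matroid_circuit}), the support bases are exactly the complements of bases of $M'$. Basis exchange then follows from Definition~\ref{def:matroid_basis} applied to $M'^*$, and $M'^*$ is the underlying matroid.

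Incomparability of $\mathcal{C}^*$ is automatic. For circuit elimination, take $C_1\neq C_2$ in $\mathcal{C}^*$ and $e\in C_1\cap C_2$. I would choose a support basis $B$ adapted to $C_1\cup C_2$. Expressing the corresponding vectors through the RREF of $B$ via the span condition, I would produce a vector in $\mathcal{V}$ with $e$ eliminated and support inside $(C_1\cup C_2)\setminus e$. The intersection over \emph{all} RREFs in the definition of a vector set is exactly what should guarantee that this ``combination'' actually lies in $\mathcal{V}$.

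\emph{Correspondence with Pl\"ucker coordinates.} Given $\mathcal{V}$, I would define $p$ up to global scaling by fixing a reference basis $B_0$ of the underlying matroid. For an adjacent basis $B_0\setminus i\cup j$, set $p_{B_0\setminus i\cup j}/p_{B_0}$ equal to the appropriately signed RREF entry $v^i_j$ of the complementary support basis. Then extend along basis-exchange paths. For the inverse direction, recover $\mathcal{V}$ from $p$ as the set of vectors orthogonal to the fundamental cocircuit vectors built from $p$ in the usual way.

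The main obstacle is this third part. One must show that the definition of $p$ is path-independent and that $p$ satisfies the three-term (and then all) Pl\"ucker relations. My plan is to reduce to relations on small minors, where the RREFs for neighbouring support bases are related by a single pivot. Uniqueness of RREFs, from the first part, is what makes the pivot well defined. I would then invoke the known equivalence between the three-term relations and the full Pl\"ucker relations for strong $k$-matroids as in \cite{BakerBowler19}.
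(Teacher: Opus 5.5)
The paper gives no proof of this statement; it only cites Proposition~2.11, Lemma~2.13 and Theorem~2.18 of \cite{Anderson19}. Your uniqueness argument for RREFs is correct and complete. The other two parts, however, have genuine gaps.

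\textbf{Third part (the main gap).} Your last step appeals to an equivalence between the three-term and the full Pl\"ucker relations for strong $k$-matroids. No such equivalence is available over a general idyll.
\begin{itemize}
\item A coordinate with matroid support satisfying the three-term relations is exactly a \emph{weak} $k$-matroid in \cite{BakerBowler19}. Weak and strong $k$-matroids are genuinely different notions (the paper keeps them apart). They are known to agree only under extra hypotheses such as perfection, which this statement does not assume. So even a completed version of your plan yields weak coordinates, and the bijection with \emph{strong} ones remains unproved.
\item Path-independence is also unsupported. Let $S'=S\setminus b\cup b'$, with RREFs $\{v^s\}$ for $S$ and $\{w^s\}$ for $S'$, and take $c\in S\setminus b$. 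Then $w^c\in\mathrm{span}(\{v^s\})$ pins down $w^c_b=-v^c_{b'}(v^b_{b'})^{-1}$ exactly. For $i\notin S\cup S'$, though, it only gives $w^c_i\in v^c_i\boxplus w^c_b v^b_i$, which can be multivalued. The next exchange in a square, $S'\to S'\setminus c\cup c'$, uses exactly such an entry, namely $w^c_{c'}$.
\item Uniqueness of RREFs therefore does not make the new rows a ``pivot'' of the old ones. Consistency has to come from membership of rows in the \emph{other} spans, and you give no such argument.
\item You would also need to show that your inverse map $p\mapsto(\text{cocircuits})^\perp$ produces sets satisfying Definition~\ref{def:vector_set} and inverts the forward map.
\end{itemize}
A cleaner route is available. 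Define $y^{S,e}$ for $e\notin S$ by $y^{S,e}_e=-1$, $y^{S,e}_s=v^s_e$ for $s\in S$, and $0$ elsewhere. Then $x\in\mathrm{span}(\{v^s\}_{s\in S})$ is equivalent to $x\perp y^{S,e}$ for all $e\notin S$. Hence $\mathcal{V}=\{y^{S,e}\}^\perp$, and the minimal-support vectors are orthogonal to these fundamental-cocircuit vectors for \emph{all} intersection sizes. That is the kind of input Baker--Bowler's strong dual-pair criterion needs, as opposed to orthogonality only for intersections of size at most $3$, which characterizes weak pairs.

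\textbf{Second part.} Circuit elimination is asserted, not proved. Forming an explicit ``combination'' gives you no control over all the other spans, so its membership in $\mathcal{V}$ cannot just be hoped for. A single RREF suffices if you argue by contradiction:
\begin{itemize}
\item Suppose no nonzero vector of $\mathcal{V}$ is supported in $(C_1\cup C_2)\setminus e$. Then $E\setminus((C_1\cup C_2)\setminus e)$ meets every nonzero support, so it contains a support basis $B$, and $B\cap C_1=B\cap C_2=\{e\}$.
\item Take $x\in\mathcal{V}$ with support $C_1$. Comparing $B$-coordinates in the span relation for $\{v^b\}_{b\in B}$ forces the coefficients to be $x_b$, hence $0$ for $b\neq e$.
\item So $x_i=x_e v^e_i$ for all $i$, and $C_1=\mathrm{supp}(v^e)$. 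Likewise $C_2=\mathrm{supp}(v^e)$, contradicting $C_1\neq C_2$.
\end{itemize}
Finally, your identification is reversed. The support bases are the bases of $M'^*$, so the underlying matroid, to whose bases they are dual, is $M'$, not $M'^*$. Your third part tacitly uses $M'$ when it takes complements of support bases as bases. Matching $M'$ with $\mathrm{supp}(p)$ also presupposes the third part.
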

The support of the RREFs precisely indicate the circuits of the underlying matroid.

\begin{example}
    $\mathcal{V}=\{0000, 1110, 1101, 0011, 1111\} \subset \mathbb{K}^4$ is a $\mathbb{K}$-vector set. Indexing the coordinates by $\{0, \dots, 3\}$, its support bases are 
    \begin{itemize}
        \item $02$, with RREF $\{1101, 0011\}$
        \item $03$, with RREF $\{1110, 0011\}$
        \item $12$, with RREF $\{1101, 0011\}$
        \item $13$, with RREF $\{1110, 0011\}$
        \item $23$, with RREF $\{1110,1101\}$
    \end{itemize}
    The support bases are complementary to the bases of the matroid in Example \ref{eg:realizable_matroid}.
    $\mathcal{V}$ is a $\mathbb{K}$-vector set because the spans of RREFs are 
    \begin{itemize}
        \item $\mathrm{span}(1101, 0011) = \{0000, 1101, 0011, 1110, 1111\} = \mathcal{V}$
        \item $\mathrm{span}(1110, 0011) = \{0000, 1101, 0011, 1101, 1111\} = \mathcal{V}$
        \item $\mathrm{span}(1110, 1101) = \{0000, 1110, 1101, 0011,1011,0111, 1111\} = \mathcal{V} \cup \{1011,0111\}$
    \end{itemize}
    and their intersection is precisely $\mathcal{V}$. The RREFs are minimally supported among nonzero elements of $\mathcal{V}$, supported on $012, 013, 23$, which are the circuits of the matroid in Example \ref{eg:realizable_matroid}.
\end{example}

\begin{definition}
    Let $k$ be an idyll. Two elements $a=(a_i)_{i \in E}$, $b=(b_i)_{i \in E} \in k^E$ are orthogonal, denoted $a \perp b$, if 
    $$\sum_{i \in E} a_ib_i \in N_k.$$
    Two subsets $X, Y \subset k^E$ are orthogonal, denoted $X \perp Y$, if $a \perp b$ for all $a \in X$ and all $b\in Y$.
    Given a subset $S \subset k^E$, define the \textbf{orthogonal complement}
    $$S^\perp := \{b \in k^E \mid a\perp b \text{ for all $a \in S$}\}.$$
\end{definition}

\begin{definition}
    Let $k$ be an idyll. The \textbf{support} of an element $a=(a_i)_{i \in E} \in k^E$ is the subset of $E$ for which the component of $a$ is nonzero.
    $$\mathrm{supp}(a) := \{i \in E \mid a_i \neq 0\}$$
    Given a subset $S \subset k^E$, define the minimal support
    $$\mathrm{minsupp}(S) := \{a \in S \mid \text{$\mathrm{supp}(a)$ is minimal with respect to inclusion among elements of $S$}\}.$$
\end{definition}

\begin{definition}
    Let $k$ be an idyll, and let $\mathcal{V} \subset k^E$ be a $k$-vector set.
    The \textbf{dual vector set} $\mathcal{V}^*$ is the orthogonal complement of the minimally supported elements of $\mathcal{V}$
    $$\mathcal{V}^* = \mathrm{minsupp}(\mathcal{V} \setminus \{0\})^\perp$$
    $\mathcal{V}^*$ is also a $k$-vector set (Theorem 2.18 in \cite{Anderson19}).
\end{definition}
    The dual vector set generalizes duality of matroids in the sense that if $M$ is the underlying matroid of $\mathcal{V}$, then $M^*$ is the underlying matroid of $\mathcal{V}^*$.

    Since $\mathrm{minsupp}(\mathcal{V} \setminus \{0\}) \subset \mathcal{V}$, the orthogonal complement of $V$ is contained in its dual, $\mathcal{V}^\perp \subset \mathcal{V}^*$. Unfortunately, this may be a proper inclusion in general and $\mathcal{V}^\perp$ may not be a $k$-vector set (a counterexample is given in Section 5.4.4 in \cite{Anderson19}).
\begin{definition}
    An idyll $k$ is called \textbf{perfect} if, for all $k$-vector sets $\mathcal{V}$, the above inclusion is equality; $\mathcal{V}^\perp = \mathcal{V}^*$.
\end{definition}

The perfect property will be desirable in the remainder of the paper. All doubly-distributive hyperfields (for which $(a \boxplus b)(c \boxplus d) = ac \boxplus ad \boxplus bc \boxplus bd$) are perfect idylls (Corollary 3.45 in \cite{BakerBowler19}). This includes all fields, $\mathbb{K}$, $\mathbb{S}$, and $\mathbb{T}$.

A notion of morphism for matroids over a perfect idyll $k$, in terms of their $k$-vector sets, was introduced in \cite{JarraLorscheidVital26} via multiplication by a submonomial matrix:
\begin{definition}
    Let $k$ be a perfect idyll. A matrix with entries in $k$ is \textbf{submonomial} if each row and column has at most one nonzero entry. A $E_2 \times E_1$ submonomial matrix (with rows indexed by $E_2$ and columns by $E_1$) defines a map $k^{E_1} \rightarrow k^{E_2}$ by matrix multiplication.

    For any $k$-vector sets $\mathcal{V}_1 \subset k^{E_1}$ and $\mathcal{V}_2 \subset k^{E_2}$, a \textbf{$k$-vector set morphism} is a map $f:k^{E_1} \rightarrow k^{E_2}$ given by multiplication by a submonomial matrix such that $f(\mathcal{V}_1) \subset \mathcal{V}_2$.
\end{definition}
Vector sets over a perfect idyll $k$ and their morphisms form a category, which we denote by $\texttt{Matoid}_k$.

Every submonomial map $f:k^{E_1} \rightarrow k^{E_2}$ has an associated map of pointed sets $\underline{f}: E_1 \sqcup \bullet \rightarrow E_2 \sqcup \bullet$, given by 
$$i \mapsto \begin{cases}
    j & \text{if the $ji$ entry is nonzero}\\
    \bullet & \text{if the $i$-th column is zero}
\end{cases}$$
If $f$ is a morphism of $k$-vector sets, then $\underline{f}$ is a pointed strong map of the underlying matroids (Proposition C in \cite{JarraLorscheidVital26}). This association is functorial, yielding a functor $\texttt{Matoid}_k \rightarrow \texttt{Matroid}_\bullet$. In the case for the Krasner hyperfield $k = \mathbb{K}$, the functor is a faithful embedding, essentially injective and surjective on objects. In this sense, $\mathbb{K}$-matroids are the same as ordinary matroids. However, the functor is not full; i.e., not all pointed strong maps arise as the image of a morphism of $\mathbb{K}$-vector sets. Due to the fact that submonomial matrices have at most one nonzero entry per column, only the pointed strong maps $f: E_1 \sqcup \bullet \rightarrow E_2 \sqcup \bullet$ arise for which, for each $j \in E_2$, there exists at most one $i\in E_1$ such that $f(i) = j$.

There are analogous constructions of restriction, deletion, contraction, direct sum for $k$-vector sets:
\begin{definition}
    Let $k$ be a perfect idyll and $\mathcal{V} \subset k^E$ be a $k$-vector set on a finite set $E$. Given a subset $S \subset E$,
    \begin{itemize}
        \item The \textbf{restriction} is the $k$-vector set $\subset k^S$ defined by $$\mathcal{V}_{|S} = \left\{(a_i)_{i \in S} \in k^S \mid \text{there exists $(b_i)_{i \in E} \in \mathcal{V}$ with $b_i = \begin{cases}a_i & i \in S\\ 0 & \text{otherwise}\end{cases}$ }\right\}.$$
        The restriction is equipped with a $k$-vector set morphism
        $\mathcal{V}_{|S} \rightarrow \mathcal{V}$ given by the $E \times S$ submonomial matrix whose $S \times S$ minor is the identity matrix, and zero on the other rows indexed by $E \setminus S$.
        
        \item The \textbf{deletion} is the restriction to the complement $E \setminus S$.
        
        \item The \textbf{contraction} is the $k$-vector set $\subset k^{E\setminus S}$ defined by $$\mathcal{V}_{/S} = \left\{(a_i)_{i \in S} \in k^{E \setminus S} \mid \text{there exists $(b_i)_{i \in E} \in \mathcal{V}$ with $b_i = a_i$ for all $i\in E \setminus S$}\right\}.$$
        The contraction is equipped with a $k$-vector set morphism
        $\mathcal{V} \rightarrow \mathcal{V}_{|S}$ given by the $(E \setminus S) \times E$ submonomial matrix whose $(E \setminus S) \times (E \setminus S)$ minor is the identity matrix, and zero on the other columns indexed by $S$.
    \end{itemize}
\end{definition}
It is necessary for the idyll $k$ to be perfect in order for the sets $\mathcal{V}_{|S}$ and $\mathcal{V}_{/S}$ to be consistent with the restriction and contraction of $k$-Pl\"ucker coordinates defined in \cite{BakerBowler19}. See Section 4.2. and Theorem in \cite{Anderson19} for a discussion of this issue.

\begin{definition}\label{def:vector_set_direct_sum}
    Let $k$ be a perfect idyll, and $\mathcal{V}_1 \subset k^{E_1}$ and $\mathcal{V}_2 \subset k^{E_2}$ be $k$-vector sets on finite sets $E_1, E_2$ respectively. The \textbf{direct sum} is the $k$-vector set $\subset k^{E_1 \sqcup E_2}$ defined by 
    \begin{align*}
        &\mathcal{V}_1 \oplus \mathcal{V}_2 = \\&\left\{(a_i)_{i \in E_1 \sqcup E_2} \in k^{E_1 \sqcup E_2} \mid \text{there exists $(b_i)_{i \in E_1} \in \mathcal{V}_1$, $(c_i)_{i \in E_2} \in \mathcal{V}_2$ 
        with $a_i = \begin{cases}b_i & i \in E_1\\ c_i & i \in E_2\end{cases}$} \right\}.
    \end{align*}
    For $i = 1,2$, there are $k$-vector set morphisms $\mathcal{V}_i \rightarrow \mathcal{V}_1 \oplus \mathcal{V}_2$ by restricting to $E_i$.
\end{definition}

\begin{definition}
    Let $k$ be a perfect idyll, and $E$ a finite set. Given a $k$-vector set $\mathcal{V} \subset k^E$,
    \begin{itemize}
        \item an element $i \in E$ is called a \textbf{loop} if it a loop in the underlying matroid.
        \item two elements $i,j \in E$ are called \textbf{parallel} if they are parallel in the underlying matroid.
    \end{itemize}
     A $k$-vector set with no loops or parallel elements is called \textbf{simple}. Each $k$-vector set has a unique simplification up to isomorphism by deleting all loops and all but one element of each parallel class.

     We denote the full subcategory in $\texttt{Matoid}_k$ of simple $k$-vector sets by $\texttt{Matroid}^{\mathrm{simple}}_k$.
\end{definition}

\begin{lemma}\label{lem:vector_set_scaling}
    Let $k$ be a perfect idyll and $\mathcal{V} \subset k^E$ a $k$-vector set. Denote by $\lambda(\mathcal{V})$ the set obtained from $\mathcal{V}$ by (non-isotropically) scaling each coordinate direction $i \in E$ by a nonzero $\lambda_i \in k$. i.e.,
    $$\lambda(\mathcal{V}) = \left\{(\lambda_ia_i)_{i \in E} \in k^E \mid (a_i)_{i \in E} \in \mathcal{V}\right\}$$
    Denote the element obtained by scaling $(a_i)_{i \in E}$ by $$\lambda (a_i)_{i \in E} := (\lambda_ia_i)_{i \in E}.$$
    Then $\lambda(\mathcal{V})$ is $k$-vector set isomorphic to $\mathcal{V}$.
    \begin{proof}
        The map $a \mapsto \lambda a$ is defined multiplication by the diagonal $E \times E$ matrix with entries $\lambda_i$, which is a submonomial matrix. Its inverse is the diagonal matrix with entries $\lambda_i^{-1}$. 
        The image of $\mathcal{V}$ under this map is $\lambda(\mathcal{V})$ by construction.
        Hence this map determines an isomorphism $\mathcal{V} \cong \lambda(\mathcal{V})$ and $\lambda(\mathcal{V})$ is a $k$-vector set.
    \end{proof}
\end{lemma}

\begin{lemma}\label{lem:vector_set_permutation}
    Let $k$ be a perfect idyll and $\mathcal{V} \subset k^E$ a $k$-vector set. Let $\sigma: E \rightarrow E$ be a permutation of the coordinate directions. Denote by 
    $$\sigma(\mathcal{V}) = \left\{(a_{\sigma (i)})_{i \in E} \in k^E \mid (a_i)_{i \in E} \in \mathcal{V}\right\}$$
    Denote the element obtained by permutation by $$\sigma(a_i)_{i \in E} := (a_{\sigma (i)})_{i \in E}.$$
    Then $\sigma(\mathcal{V})$ is $k$-vector set isomorphic to $\mathcal{V}$.
    \begin{proof}
        The map $a \mapsto \sigma(a)$ is defined multiplication by the $E \times E$ permutation matrix representing $\sigma$, which is a submonomial matrix. Its inverse is the transpose matrix representing the inverse permutation.
        The image of $\mathcal{V}$ under this map is $\sigma(\mathcal{V})$ by construction.
        Hence this map determines an isomorphism $\mathcal{V} \cong \sigma(\mathcal{V})$ and $\sigma(\mathcal{V})$ is a $k$-vector set.
    \end{proof}
\end{lemma}

\begin{definition}\label{def:vector_set_symmetric}
    Let $k$ be a perfect idyll and $\mathcal{V} \subset k^E$ a $k$-vector set. Let $\sigma: E \rightarrow E$ be a permutation. We call $\mathcal{V}$ \textbf{symmetric with respect to $\sigma$} if $\mathcal{V} = \sigma(\mathcal{V})$.
\end{definition}

\begin{definition}\label{def:vector_set_duplicate}
    Let $k$ be a perfect idyll and $\mathcal{V} \subset k^E$ a $k$-vector set. We denote $e_i$ to be the standard coordinate basis vectors, so that $\sum_{i \in E} a_i e_i := (a_i)_{i \in E}$. We call two parallel elements $j, j' \in E$ \textbf{duplicate} in $\mathcal{V}$ if circuit $e_{j'} - e_j \in\mathcal{V}$.
\end{definition}

\begin{lemma}
	Let $k$ be a perfect idyll and let $\mathcal{V} \subset k^E$ be a $k$-vector set. If $j,j' \in E$ are duplicate in $\mathcal{V}$, then $\mathcal{V}$ is symmetric with respect to the transposition of $j$ and $j'$.
    \begin{proof}
        Let $\sigma: E \rightarrow E$ be the transposition of $j$ and $j'$. For any $\sum_{i \in E} x_i e_i \in \mathcal{V}$, we show that the transposed element $\sum_{i \in E} x_{\sigma(i)} e_i$ is also in $\mathcal{V}$.
        
        Since $e_j - e_j' \in \mathcal{V}$, every support basis $B \subset E$ contains $j$ or $j'$ or both.
        \begin{itemize}
            \item Suppose $B$ contains one but not the other (without loss of generality, suppose it contains $j$ but not $j'$). 

            For $\ell \in B$, let $\sum_{i \in E} R^\ell_i e_i$ denote the RREF. Since RREFs for $k$-vector sets are unique, the RREF for $B$ must contain $e_j - e_j'$. So we have $$R^j_i = \begin{cases}
                1 & i = j\\
                -1 & i = j'\\
                0 & \text{otherwise}
            \end{cases}$$
            Since $\sum_{i \in E} x_i e_i = x_j e_j + x_{j'}e_{j'} + \sum_{i \in E\setminus\{j,j'\}} x_i e_i$ is in the span of the RREF, we must have, for all $i \in E$,
            $$-x_i + \sum_{\ell \in B} a_\ell R^\ell_i \in N_k.$$
            As a result,
            $x_j = a_j$
            and
            \begin{align*}
            & -x_j -x_{j'}  + \sum_{\ell \in B \setminus j} a_\ell R^\ell_{j'}\\
            =& -x_{j'} -x_j + \sum_{\ell \in B \setminus j} a_\ell R^\ell_{j'} \\
            =& -x_{j'} + a_j R^j_{j'} + \sum_{\ell \in B \setminus j} a_\ell R^\ell_{j'} \in N_k.
            \end{align*}
            Then, the transposed element $x_{j'} e_j + x_je_{j'} + \sum_{i \in E\setminus\{j,j'\}} x_i e_i=\sum_{i \in E} x_{\sigma(i)} e_i$ is also in the span of the RREF.

            \item Suppose $B$ contains both.
            For $\ell \in B$, let $\sum_{i \in E} R^\ell_i e_i$ denote the RREF. Since $e_j - e_j'$ is in the span of the RREF, we must have
            \begin{align*}
                -1 + \sum_{\ell \in B} a_\ell R^\ell_j &\in N_k\\
                1 + \sum_{\ell \in B} a_\ell R^\ell_{j'} &\in N_k\\
                \sum_{\ell \in B} a_\ell R^\ell_{i} &\in N_k & \text{for $i \in E \setminus \{i,j\}$}
            \end{align*}
            These imply that $a_\ell = 0$ for $\ell \in B \setminus \{i,j\}$, $a_j = 1$, and $a_{j'} = -1$. As a result, we have $R^j_i = R^{j'}_i$ for $i \in E \setminus B$ so 
            $$R^{j}_i = R^{j'}_{\sigma(i)} \text{ for all $i\in E$}.$$ i.e., the $j$ and $j'$ rows of the RREF are symmetric with respect to the transposition. 
            Then, the transposed element $x_{j'} e_j + x_je_{j'} + \sum_{i \in E\setminus\{j,j'\}} x_i e_i=\sum_{i \in E} x_{\sigma(i)} e_i$ is also in the span of the RREF.
        \end{itemize}
        In any support basis, the transposed element $x_{j'} e_j + x_je_{j'} + \sum_{i \in E\setminus\{j,j'\}} x_i e_i=\sum_{i \in E} x_{\sigma(i)} e_i$ is in the span of the RREF. Therefore, $\sum_{i \in E} x_{\sigma(i)} e_i \in \mathcal{V}$, and $\mathcal{V}$ is symmetric with respect to the transposition $\sigma$.
    \end{proof}
\end{lemma}

\begin{definition}\label{def:vector_set_duplication}
    Let $\mathcal{V} \subset k^E$ and $\mathcal{V}' \subset k^{E \sqcup \{j'\}}$ be $k$-vector sets. We call $\mathcal{V}'$ a \textbf{duplication of $\mathcal{V}$ along $j \in E$} if 
    \begin{itemize}
        \item the elements $j$ and $j'$ are duplicate in $\mathcal{V}'$
        \item $\mathcal{V}$ is obtained from $\mathcal{V}'$ by deleting $j'$.
    \end{itemize}
\end{definition}

\begin{example}
    We give an example of duplication over the Krasner hyperfield $\mathbb{K}$. $\mathcal{V}=\{000, 111\} \subset \mathbb{K}^{\{0,1,2\}}$ is a $\mathbb{K}$-vector set. Its duplication in the last index $2$ is $\mathcal{V}' =\{0000, 1110, 1101,0011,1111\} \subset \mathbb{K}^{\{0,1,2,2'\}}$. Because $-1 = 1$ in $\mathbb{K}$, the last two indices $2$ and $2'$ are parallel, with circuit $e_{2'}-e_2=e_2+e_{2'}=0011 \in \mathcal{V}'$.
\end{example}

\begin{lemma}\label{lem:vector_set_duplication_unique}
    Let $k$ be a perfect idyll and $\mathcal{V} \subset k^E$ a $k$-vector set. The duplication $\mathcal{V}$ along $j \in E$ is unique.
    \begin{proof}
        Let $\mathcal{V}' \subset k^{E \sqcup \{j'\}}$ be a duplication along $j \in E$. Let $\sigma:E \sqcup \{j'\} \rightarrow E \sqcup \{j'\}$ be the trasposition of $j$ and $j'$.
        
        Since $e_j-e_{j'} \in \mathcal{V}'$, every support basis $B$ contains $j$ or $j'$ (or both). By symmetry, the support bases are uniquely determined by the support bases which contain $j'$.

        It follows from the definition that $B$ is a support basis of $\mathcal{V}$ if and only if $B \sqcup j'$ is a support basis of $\mathcal{V}'$. An RREF for $B \sqcup j'$ can be constructed from an RREF for $B$ as follows:
        \begin{itemize}
            \item if $\{v^i\}_{i \in B}$ is a RREF for $B$, and $j \in B$, then $\{v^i\}_{i \in B} \cup \{\sigma(v^j)\}$ is a RREF for $B \sqcup j'$.
            \item if $\{v^i\}_{i \in B}$ is a RREF for $B$, and $j \notin B$, then $\{v^i\}_{i \in B} \cup \{e_{j'}-e_j\}$ is a RREF for $B \sqcup j'$.
        \end{itemize}
        Since RREFs are unique, it follows that the RREFs in every support basis of $\mathcal{V}'$ is uniquely determined by the RREFs in every support basis of $\mathcal{V}'$. Since $k$-vector sets are defined in terms of their RREFs in every support basis, the duplication $\mathcal{V}'$ is unique.
    \end{proof}
\end{lemma}

\begin{lemma}\label{lem:parallel_rescale_to_duplicate}
	Let $k$ be a perfect idyll and let $\mathcal{V} \subset k^E$ be a $k$-vector set. Suppose that $i,j \in E$ are parallel in $\mathcal{V}$. Then there exists a $k$-vector set isomorphic to $\mathcal{V}$ by rescaling the $j$ coordinate in which $i$ and $j$ are duplicate.
    \begin{proof}
    	If $i,j \in E$ are parallel in $\mathcal{V}$. Then there exists a circuit
     $ae_i + b e_j \in \mathcal{V}$, with $a, b$ nonzero. By (isotropic) scaling, we also have $e_i + a^{-1}b e_j \in \mathcal{V}$.
     
     By rescaling the $j$ coordinate by $-a b^{-1}$, we obtain the desired $k$-vector set which contains $e_i - e_j$. By Lemma \ref{lem:vector_set_scaling}, this is isomorphic to $\mathcal{V}$.
    \end{proof}
\end{lemma}
For any $k$-vector set $\mathcal{V}$, by applying Lemma \ref{lem:parallel_rescale_to_duplicate} repeatedly, we obtain an isomorphic $k$-vector set in which all parallel elements are duplicate. Such a $k$-vector set has a unique simplification, since by symmetry the result is identical regardless of the choices of which parallel elements to delete.

\section{Proto-exact Categories}\label{sec:proto-exact}
Proto-exact categories are a generalization of exact categories, introduced by Dyckerhoff and Kapranov in \cite{DyckerhoffKapranov19}. Proto-exact categories contain a notion of short exact sequences, but are not necessarily additive.
\begin{definition}[Dyckerhoff-Kapranov, 2019 \cite{DyckerhoffKapranov19}]
    A category $\mathtt{C}$ is proto-exact by definition if it has a zero object and two classes of morphisms $\mathfrak{M}$ and $\mathfrak{E}$ (``\textbf{admissible monomorphisms}" and ``\textbf{admissible epimorphisms}") satisfying
    \begin{itemize}
        \item every morphism $0 \rightarrow A$ is in $\mathfrak{M}$ and every morphism $A \rightarrow 0$ is in $\mathfrak{E}$.
        
        \item $\mathfrak{M}$ and $\mathfrak{E}$ contain all isomorphisms and are closed under composition by isomorphisms.
        
        \item (denoting admissible monomorphisms in $\mathfrak{M}$ by $\hookrightarrow$ and admissible epimorphisms by $\mathfrak{E}$ $\twoheadrightarrow$) every commutative square 
        \[\begin{tikzcd}
        	A & B \\
        	C & D
        	\arrow[hook, from=1-1, to=1-2]
        	\arrow[two heads, from=1-1, to=2-1]
        	\arrow[two heads, from=1-2, to=2-2]
        	\arrow[hook, from=2-1, to=2-2]
        \end{tikzcd}\]
        is a push-out if and only if it is a pullback (in which case, it is called \textbf{bi-Cartesian}).

        \item Every diagram $C \twoheadleftarrow A \hookrightarrow B$ and $C \twoheadrightarrow D \hookleftarrow B$ can be completed to a bi-Cartesian square as above.
    \end{itemize}
    
    Given a proto-exact category, bi-Cartesian squares of the form
    \[\begin{tikzcd}
    	A & B \\
    	0 & C
    	\arrow[hook, from=1-1, to=1-2]
    	\arrow[two heads, from=1-1, to=2-1]
    	\arrow[two heads, from=1-2, to=2-2]
    	\arrow[hook, from=2-1, to=2-2]
    \end{tikzcd}\]
    are called admissible short-exact sequences.
    \end{definition}
    
    \begin{proposition}\label{prop:proto-exact_matroid_module}
        Let $k$ be a perfect idyll. Let $B$ be a band.
        \begin{itemize}
            \item The category $\texttt{Matroid}_\bullet$ is proto-exact, where $\mathfrak{M}$ are the restriction maps up to isomorphism, and $\mathfrak{E}$ are contraction maps up to isomorphism. (Theorem 5.11 in \cite{EppolitoJunSzczensny20}).
            
            \item The category $\texttt{Matoid}_k$ is proto-exact, where $\mathfrak{M}$ are the restriction maps up to isomorphism, and $\mathfrak{E}$ are contraction maps up to isomorphism (Theorem 3.11 in \cite{JunSistkoWright25}).
            
            \item The category $\texttt{Module}_B$ is proto-exact, where $\mathfrak{M}$ are the inclusion maps from a strict submodule up to isomorphism, and $\mathfrak{E}$ are quotient maps by a strict submodule up to isomorphism (Corollary 7.10 in \cite{Hamada26}). Additionally, any full subcategory of $\texttt{Module}_B$ which is closed under taking strict submodules and quotients by strict submodules is proto-exact (Theorem 7.9 in \cite{Hamada26}).
        \end{itemize}
    \end{proposition}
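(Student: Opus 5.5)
Since each of the three assertions is attributed to an existing source, the plan is to verify the proto-exact axioms directly in each category, with the cited theorems as the safety net. The strategy is the same each time. First identify the zero object: the empty matroid for $\texttt{Matroid}_\bullet$ (its ground set is the pointed set $\bullet$), the vector set $\{0\}\subset k^{\emptyset}$ for $\texttt{Matoid}_k$, and the trivial module $\{0\}$ for $\texttt{Module}_B$. Next check that the unique maps $0\to A$ are restrictions to $\emptyset$ (respectively inclusions of the zero submodule). Likewise the unique maps $A\to 0$ are contractions by all of $E$ (respectively quotients by the whole module). Closure of $\mathfrak{M}$ and $\mathfrak{E}$ under isomorphisms holds by construction, since both classes are defined ``up to isomorphism''.

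The substantive part is the square axioms. For matroids, given a restriction $A=M_{|S}\hookrightarrow M$ and a contraction $A\twoheadrightarrow A_{/T}$ with $T\subset S$, I complete the square with $M_{/T}$ and $(M_{/T})_{|S\setminus T}$. The key identity is $(M_{/T})_{|S\setminus T}=(M_{|S})_{/T}$, which holds because the rank formulas of Definition \ref{def:matroid_restriction_deletion_contraction} agree. The dual completion, given $C\twoheadrightarrow D\hookleftarrow B$, is handled symmetrically by pulling the flat structure back. To show that pushouts and pullbacks coincide, I would use that a commutative square of restriction and contraction maps is determined by a pair $T\subset S\subset E$. A competing cone $X\to M$, $X\to (M_{|S})_{/T}$ must factor through $M_{|S}$ because its image avoids $E\setminus S$. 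The rank inequality of Definition \ref{def:matroid_strong_map} then gives the strong-map condition for the factored map.

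For $\texttt{Matoid}_k$ the argument is identical, carried out on vector sets. Here perfectness of $k$ is what guarantees that the restriction $\mathcal{V}_{|S}$ and the contraction $\mathcal{V}_{/S}$ are again $k$-vector sets, and that restriction and contraction commute. Factorizations of submonomial maps through restriction and contraction matrices are checked entrywise.

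For $\texttt{Module}_B$, given a strict submodule $W\subset V$ and a strict submodule $U\subset W$, the completing object is $V/\langle U\rangle$. Inside it, $W/\langle U\rangle$ must be shown to be a strict submodule. This is the step I expect to be the main obstacle. One has to show that the null set of $W/\langle U\rangle$ agrees with the restriction of the null set of $V/\langle U\rangle$, that is, $\langle U\rangle_V\cap W^+=\langle U\rangle_W$. This requires analyzing the null ideal generated by $U$ via the substitution rule, and I would defer to Hamada's Corollary 7.10 for it. Granting this, the bi-Cartesian property follows from the universal properties of kernels and cokernels already established in Section \ref{sec:band_module}.

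Finally, the statement about full subcategories is formal. All the objects and squares used above are built from strict submodules and quotients by strict submodules, so a full subcategory closed under these operations contains every required completion. Its limits and colimits of these shapes are computed as in $\texttt{Module}_B$, so the axioms are inherited.
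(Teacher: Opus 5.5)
The paper gives no argument here; each bullet is imported from the cited theorem. Your direct verification is a different route. For $\texttt{Matroid}_\bullet$ and $\texttt{Matoid}_k$ it can be completed, but not as written, and for modules one step fails outright.

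\emph{Matroids and vector sets.} It is false that every commutative square of admissible maps is determined by a pair $T\subset S\subset E$. Commutativity only forces the right-hand contraction to be $M\twoheadrightarrow M_{/T'}$ with $T'\cap S=T$. For example, take $M$ free on $\{1,2\}$, $S=\{1\}$, $T=\emptyset$, $T'=\{2\}$, with bottom edge the isomorphism $M_{|S}\to M_{/T'}$. This square has admissible edges and commutes, yet it is neither a pushout nor a pullback.

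The correct structure is to show that the canonical square $(M_{|S},M,(M_{|S})_{/T},M_{/T})$ is both a pullback and a pushout. Then any pushout (resp.\ pullback) square is isomorphic to the canonical completion of its span (resp.\ cospan), by uniqueness of (co)limits, and so is bi-Cartesian. You also verify only the pullback half. The missing pushout half is this: a strong map $h:M\to Y$ with $h(T)=\bullet$ factors through $M_{/T}$, because $\mathrm{rk}_{M_{/T}}(X)=\mathrm{rk}(X\cup T)-\mathrm{rk}(T)$. The same two fixes carry over to vector sets. There, $(\mathcal{V}_{|S})_{/T}=(\mathcal{V}_{/T})_{|S\setminus T}$ holds set-theoretically; perfectness is needed only for these to be vector sets.

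\emph{Modules.} You claim that, granting $\langle U\rangle_V\cap W^+=\langle U\rangle_W$, bi-Cartesianness follows from the universal properties of kernels and cokernels. This is false. The cokernel property does give the pushout. The pullback, however, needs $W$ to be saturated: if $[v]\in V/\langle U\rangle$ lies in the image of $W$, then $v\in W$. A strict submodule is only closed under scaling, and saturation can fail.

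Here is a counterexample. Let $k$ be a field and $V=k^2$, with $N_V$ the formal sums whose total is $0$. Take distinct lines $L_1,L_2$, and set $W=L_1\cup L_2$ and $U=L_1$. In both $V$ and $W$, $\langle L_1\rangle$ is the set of formal sums whose total lies in $L_1$, so your identity holds. Yet $W/\langle L_1\rangle\to V/\langle L_1\rangle$ is an isomorphism, both sides being $k$. The square is a pushout by the cokernel property. Since its bottom edge is an isomorphism, it is a pullback only if $W\hookrightarrow V$ is an isomorphism, and it is not.

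So with the paper's definitions of strict submodule and quotient, this square violates the axiom, and the third bullet cannot be proved as formulated. Deferring to Hamada helps only if his admissible classes are narrower, e.g.\ restricted to saturated submodules. Since $k^2$ and $W$ are $k$-linear spaces, the same square also breaks the later proto-exactness claim for $\texttt{LinearSpace}_k$. Over $\mathbb{K}$, the example $V=\mathbb{K}\langle u,w,v\rangle/\langle u+w+v\rangle$, $W=\{0,u,w\}$, $U=\{0,u\}$ behaves the same way. There, quotienting by $U$ identifies $w$ with $v$, i.e.\ it contracts and then simplifies.
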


\section{Linear Spaces over Idylls and Hyperfields}\label{sec:linear_space}

Given that a module over a band $B$ is like a module over a commutative ring, we may expect that a module over a perfect idyll $k$ is like a vector space over a field. We may define linear independence as follows:

\begin{definition}\label{def:vector_independence}
    Let $k$ be a perfect idyll, and let $V$ be a $k$-module.
    
    A collection $\{v_i\}_{i \in S}$ of elements $v_i \in V$ is \textbf{(linearly) independent} if 
    $$\sum_{i \in S} a_i v_i \in N_V, \text{for $a_i \in k$}$$
    implies that $a_i = 0$, for all $i \in S$.
\end{definition}

However, there are pathologies with linear independence in $k$-modules. Unlike for vector spaces, the maximal linearly independent sets of $V$ do not necessarily form the bases of a matroid. In particular, their maximal independent sets may not be the same size, and there is no well-defined notion of dimension. The following counterexample over the Krasner hyperfield, in $\mathbb{K}^3$ is due to Chris Eppolito (Example 4.34 in \cite{Eppolito22}):

\begin{example}\label{eg:cartesian_fail}    
    Let $v_0, \dots, v_4 \in \mathbb{K}^3$ be the columns of the matrix
    $$\begin{bmatrix}
        0 & 1 & 1 & 1 & 1\\
        1 & 0 & 0 & 1 & 1\\
        1 & 0 & 1 & 0 & 1\\
    \end{bmatrix}$$
    The maximal linearly independent subsets are $012, 013, 04, 123, 124, 134$, which do not satisfy the basis exchange property, since they are not the same size.
\end{example}

To remedy the failure of linear independence in $k$-modules, we postulate an additional property:

\begin{definition}\label{def:k-linear_space}
    Let $k$ be a perfect idyll and let $V$ be a $k$-module. $V$ is a $k$-\textbf{\linearspace{}} if it additionally satisfies:

    \begin{itemize}
        \item For all finite collections $\{v_i\}_{i \in E}$ of elements $v_i \in V$, the set $$\mathcal{V}(\{v_i\}_{i \in E}) := \left\{ (a_i)_{i \in E} \in k^E \mid\sum_{i \in E} a_i v_i \in N_V \right\} $$ is a $k$-vector set $\subset k^E$ (according to Definition \ref{def:vector_set}).
    \end{itemize}

    $k$-\linearspace{}s with linear maps form a full subcategory of $\texttt{Module}_k$ which we denote by $\texttt{LinearSpace}_k$.
\end{definition}
The motivation behind this definition is that when $V$ is a vector space over a field $k$, and $\{v_i\}_{i \in E}$ is a finite collection of vectors in $V$, there is a canonical linear map $k^E \rightarrow V$ sending the $i$-th coordinate vector to $v_i$. Then, $\mathcal{V}(\{v_i\}_{i \in E})$ is the kernel of this map, and is a linear subspace of $k^E$. By Proposition 2.19 in \cite{Anderson19}, $k$-vector sets naturally generalize linear subspaces of $k^E$ to the case where $k$ is a perfect idyll. 

By construction, for a subset $S \subset E$, $\mathcal{V}(\{v_i\}_{i \in S})$ is the restriction $\mathcal{V}(\{v_i\}_{i \in E})_{|S}$. Implicit in the requirement that both of these are $k$-vector sets is that $k$-vector sets must be compatible the with the restriction operation, hence the requirement that the idyll $k$ be perfect.

We shall see that the property that $\mathcal{V}(\{v_i\}_{i \in E})$ is a $k$-vector set for all finite collections $\{v_i\}_{i \in E}$ is sufficient to ensure that linear independence in $k$-\linearspace{}s satisfies matroid independence axioms.

\begin{proposition}
    Let $k$ be a perfect idyll, and $V$ be a $k$-module, and $\{v_i\}_{i \in E}$ a finite collection of elements $v_i \in V$.
    
    The maximal linearly independent subsets of $\{v_i\}_{i \in E}$ are exactly complementary to the support bases of the $k$-vector set $\mathcal{V}(\{v_i\}_{i \in E})$ (defined in \ref{def:vector_set}). In other words, $\{v_i\}_{i \in S}$ is linearly independent if and only if $S$ is contained in the complement of a support basis $\mathcal{V}(\{v_i\}_{i \in E})$.
    \begin{proof}
        The contrapositive statement follows from the definitions of independence and support basis:
        
        If $\{v_i\}_{i \in S}$ is linearly dependent, then there exists a nonzero $\sum_{i \in E} a_i v_i \in N_V$ with $(a_i)_{i \in E}$ supported on $S$ and so each support basis of $\mathcal{V}(\{v_i\}_{i \in E})$ intersects $\mathrm{supp}((a_i)_{i \in E}) \subset S$.
        
        Conversely, if $S$ intersects each support basis $B$, there exists a nonzero $\sum_{i \in E} a_i v_i \in N_V$ supported on $S$. (Otherwise, $B \setminus S \subsetneq B$ supports $\mathcal{V}(\{v_i\}_{i \in E})$, contradicting minimality of the support basis.) Therefore, $\{v_i\}_{i \in S}$ is linearly dependent.
    \end{proof}
\end{proposition}

\begin{proposition}\label{prop:linear_independence_is_matroidal}
    Let $k$ be a perfect idyll, and $V$ a $k$-\linearspace{} and $\{v_i\}_{i \in E}$ a finite collection of elements $v_i \in V$.
    
    The maximal independent subsets of $\{v_i\}_{i \in E}$ are the bases of a matroid. (i.e., they satisfy the basis exchange property.) In particular, all maximal independent subsets have the same size.
    
    \begin{proof}
        This follows from Lemma 2.13 in \cite{Anderson19} which states that if $\mathcal{V}(\{v_i\}_{i \in E})$ has the structure of a $k$-vector set, then the support bases form the bases of a matroid. Then, the maximal linearly independent subsets which are the complement of the support bases form the bases of the dual matroid.
    \end{proof}
\end{proposition}

\begin{definition}\label{def:finitely_generated}
    Let $k$ be a perfect idyll. A $k$-module (or $k$-\linearspace{}) $V$ is \textbf{finitely generated} if there exists a finite collection $\{e_i\}_{i \in E}$ of elements $e_i \in V$, such that for each $v \in V$, there exists $a \in k$ and $i \in E$ such that $v = ae_i$). We call such a set $\{e_i\}_{i \in E}$ a \textbf{generating set}.
\end{definition}

\begin{proposition}\label{prop:generator_isomorphism}
    Let $k$ be a perfect idyll, and let $V$ be a finitely generated $k$-\linearspace{}. Suppose $\{e_i\}_{i \in E}$ and $\{e_i'\}_{i \in E'}$ are generating sets whose elements are nonzero and pairwise non-parallel. Then $\mathcal{V}(\{e_i\}_{i \in E})$ and $\mathcal{V}(\{e_i'\}_{i \in E'})$ are isomorphic as $k$-vector sets.
    \begin{proof}
        By definition, for each $i \in E$, there exists $a \in k$ and $j \in E'$ with $e_i = ae_j'$. Since generators are nonzero and non-parallel, this establishes a bijection between $(\{e_i\}_{i \in E}$ and $\{e_i'\}_{i \in E'}$, and they differ only by scaling by nonzero constant and reordering of elements (a bijection $E \rightarrow E'$). Then the $k$-vector sets $\mathcal{V}(\{e_i\}_{i \in E})$ and $\mathcal{V}(\{e_i'\}_{i \in E'})$ differ only by scaling or permuting the coordinate directions, and by Lemma \ref{lem:vector_set_scaling} and Lemma \ref{lem:vector_set_permutation}, they are isomorphic.
    \end{proof}
\end{proposition}

\begin{proposition}\label{prop:generator_loop_parallel}
    Let $k$ be a perfect idyll, and let $V$ be a $k$-\linearspace{} finitely generated by $\{e_i\}_{i \in E}$. 
    \begin{itemize}
        \item The element $i \in E$ is a loop in the $k$-vector set $\mathcal{V}(\{e_i\}_{i \in E})$ if and only if $e_i = 0$.
        \item The elements $i,j \in E$ are parallel in the $k$-vector set $\mathcal{V}(\{e_i\}_{i \in E})$ if and only if $e_i$ and $e_j$ are parallel, i.e., there exists $a, b \in k$ such that $a e_i = b e_j$.
    \end{itemize}
    \begin{proof}
        It is straightforward to verify equivalence of definitions:
        \begin{itemize}
            \item The element $i \in E$ is a loop, if and only if there exists nonzero $a \in k$ with $ae_i \in N_V$. By uniqueness of opposite elements, this occurs if and only if $ae_i = -0$, and hence $e_i=-a^{-1}0 = 0$.
            \item The elements $i,j \in E$ are parallel if and only if there exists $a_i, a_j \in k$ with $a_ie_i+a_je_j \in N_V$. By uniqueness of opposite elements, this occurs if and only if $a_ie_i = -a_je_j$.
        \end{itemize}
    \end{proof}
\end{proposition}
As a result, the elements $\{e_i\}_{i \in E}$ are nonzero and pairwise non-parallel if and only if $\mathcal{V}(\{e_i\}_{i \in E})$ is a simple $k$-vector set.
\begin{proposition}\label{prop:generator_simplification}
    Let $k$ be a perfect idyll, and let $V$ be a $k$-\linearspace{} finitely generated by $\{e_i\}_{i \in E}$. Suppose $S \subset E$ is a subset so that $\{e_i\}_{i \in S}$ is a finite generating set whose elements are nonzero and pairwise nonparallel.
    Then, the $k$-vector set $\mathcal{V}(\{e_i\}_{i \in S})$ is a simplification of the $k$-vector set $\mathcal{V}(\{e_i\}_{i \in E})$.
    \begin{proof}
        It is straightforward to verify equivalence of definitions. The generating set $\{e_i\}_{i \in S}$ differs from $\{e_i\}_{i \in E}$ by removing any zero elements and all but one element from each parallel class. Then $\mathcal{V}(\{e_i\}_{i \in S})$ differs from $\mathcal{V}(\{e_i\}_{i \in E})$ by deleting all loops and all but one of each parallel class, which by definition is the simplification.
    \end{proof}
\end{proposition}

\begin{proposition}\label{prop:linear_space_specification_by_generating_set}
    Let $V$ be $k$-\linearspace{}, finitely generated by $\{e_i\}_{i \in E}$. Suppose the generators $\{e_i\}_{i \in E}$ are nonzero and pairwise non-parallel. Then $V$ is uniquely determined by the simple $k$-vector set $\mathcal{V}(\{e_i\}_{i \in E})$.
    \begin{proof} 
        Since the generators are are nonzero and pairwise non-parallel, the underlying pointed set of $V$ is
        $$\{ae_i \mid i\in E, a \in k\}.$$
        Therefore, we are done if we can show that the null set $N_V$ is uniquely determined by $\mathcal{V}(\{e_i\}_{i \in E})$, which is simple again since the generators are are nonzero and pairwise non-parallel.

        Since $\{e_i\}_{i \in E}$ is a generating set, any formal sum $\sum v_i \in V^+$ may be expressed in the form
        $\sum v_i = \sum_{i \in E'} a_ie_i$
        where the $e_i$ terms are possibly duplicated (i.e., $E'$ is a multiset which is supported on $E$). Since the generators are are nonzero and pairwise non-parallel, this expression is unique.

        By definition, $\sum v_i \in N_V$ if and only if $(a_i)_{i \in E'} \in \mathcal{V}(\{e_i\}_{i \in E'})$. It follows from the definition that the $k$-vector set $\mathcal{V}(\{e_i\}_{i \in E'})$ is a duplication of $\mathcal{V}(\{e_i\}_{i \in E})$, where each element $i \in E$ is duplicated up to the number of times it appears in the multiset $E'$ (where duplication is in the sense of Definition \ref{def:vector_set_duplication}).

        By Lemma \ref{lem:vector_set_duplication_unique}, the duplication is uniquely determined by $\mathcal{V}(\{e_i\}_{i \in E})$, and therefore so is the null set $N_V$, and $V$ itself.
    \end{proof}
\end{proposition}

\begin{definition}\label{def:linear_space_specification_by_generating_set}
    In light of the unique specification determined by Proposition \ref{prop:linear_space_specification_by_generating_set}, we specify a $k$-\linearspace{} $V$ which is finitely generated by nonzero and pairwise non-parallel generators $\{e_i\}_{i \in E}$, and with null set determined by the $k$-vector set $\mathcal{V} =\mathcal{V}(\{e_i\}_{i \in E})$
    by
    $$V := k\langle e_i \mid i \in E\rangle / \left\langle \sum_{i \in E} a_ie_i \mid (a_i)_{i \in E} \in \mathcal{V}\right\rangle$$
    Since $k$-vector sets $\mathcal{V}$ themselves are uniquely determined by their circuits $\mathcal{C} \subset \mathcal{V}$ (i.e., elements of nonzero minimal support), we may furthermore write 
    $$V := k\langle e_i \mid i \in E\rangle / \left\langle \sum_{i \in E} a_ie_i \mid (a_i)_{i \in E} \in \mathcal{C}\right\rangle$$
    The quotient notation is thus chosen so that $$\left\langle \sum_{i \in E} a_ie_i \mid (a_i)_{i \in E} \in \mathcal{C}\right\rangle = \left\langle \sum_{i \in E} a_ie_i \mid (a_i)_{i \in E} \in \mathcal{V}\right\rangle$$ is the nullkernel of the unique surjective map from the free $k$-\linearspace{} $k\langle e_i \mid i \in E\rangle \rightarrow V$ which sends $e_i \mapsto e_i$.
\end{definition}

\begin{example}
    For example, we specify the $\mathbb{K}$-\linearspace{} generated by $e_0, e_1, e_2$, with simple $k$-vector set $\mathcal{V}(\{e_0, e_1, e_2\}) = \{000, 111\}$ by
    $$\frac{\mathbb{K}\langle e_0, e_1, e_2\rangle}{\langle e_0 + e_1 + e_2\rangle}.$$
    The duplication at index $2$ is a (not simple) $k$-vector set $\mathcal{V}(\{e_0, e_1, e_2, e_{2}\}) = \{0000, 1110,1101,0011, 1111\}$, and hence the null ideal $\langle e_0 + e_1 + e_2\rangle$ contains elements such as $e_0 + e_1 + e_2 + e_2$.
\end{example}

\begin{definition}\label{def:linear_space:dimension}
    Let $k$ be a perfect idyll, and $V$ a finitely generated $k$-\linearspace{}.
    
    Then define the \textbf{dimension} $\dim V$ to be the size of any maximal linearly independent subset of the generating set $\{e_i\}_{i \in E} \subset V$. This is well-defined by Proposition \ref{prop:linear_independence_is_matroidal}. By Proposition \ref{prop:generator_isomorphism} and Proposition \ref{prop:generator_simplification}, the dimension is independent of the choice of generating set.
\end{definition}
We denote the full subcategory in $\texttt{LinearSpace}_k$ of finitely generated $k$-\linearspace{} by $\texttt{LinearSpace}^{\mathrm{f.g.}}_k$. By the previous paragraph, finitely generated implies finite dimensional.

\begin{example}
    \noindent
    \begin{itemize}
    \item $\{0\}$ with trivial null set is a $k$-\linearspace{}. It is the zero object in $\texttt{LinearSpace}_k$.
    
    \item Any idyll $k$ is a $1$ dimensional $k$-\linearspace{}, where scalar multiplication is the usual multiplication.
    \end{itemize}
\end{example}

As in $\texttt{Module}_B$, we may form strict submodules and quotients by strict submodules of $k$-\linearspace{}s, which are again $k$-\linearspace{}s:
\begin{proposition}\label{prop:linear_space_subobject}
    Let $k$ be a perfect idyll, and $V$ a $k$-\linearspace{}. Let $W \subset V$ be a strict submodule. Then, 
    \begin{itemize}
        \item the strict submodule $W$ is a $k$-\linearspace{}.
        \item the quotient by strict submodule $V/\langle W\rangle$ is a $k$-\linearspace{}.
    \end{itemize}
    Also, if $V$ is finitely generated, then so are $W$ and $V/\langle W\rangle$.
    
    \begin{proof}
       We verify the vector set property. Any finite collection $\{v_i\}_{i \in E} \subset W$, is also a finite collection $\subset V$. Hence, $\mathcal{V}(\{v_i\}_{i \in E})$ is a $k$-vector set.

       For any finite collection $\{v'_i\}_{i \in E} \subset V/\langle W \rangle$, choose any representative $v_i \in V$ of the equivalence class, such that $v_i' = [v_i]$. Then since scalar multiplication is well defined on equivalence classes, it follows from the definition of quotient that for all $a_i \in k$,
       $\sum a_iv'_i = \sum [a_i v_i] \in N_{V/\langle W \rangle}$ if and only if $\sum a_i v_i \in N_V$. Hence, $\mathcal{V}(\{v'_i\}_{i \in E}) = \mathcal{V}(\{v_i\}_{i \in E})$ is a $k$-vector set.

       If $V$ is finitely generated by $\{v_i\}_{i \in E}$, then $W$ is finitely generated by the subset $W \cap \{v_i\}_{i \in E}$, and $V/\langle W\rangle$ is finitely generated by the equivalence classes $\{[v_i]\}_{i \in E}$.
    \end{proof}
\end{proposition}

\begin{proposition}
    The categories $\texttt{LinearSpace}_k$ and $\texttt{LinearSpace}^{\mathrm{f.g.}}_k$ are proto-exact, where $\mathfrak{M}$ are the inclusion maps from a strict submodule up to isomorphism, and $\mathfrak{E}$ are quotient maps by a strict submodule up to isomorphism.
    \begin{proof}
        Since $\texttt{LinearSpace}_k$ and $\texttt{LinearSpace}^{\mathrm{f.g.}}_k$ are a full subcategories of $\texttt{Module}_k$, and closed under taking strict submodules and quotient by strict submodules, this follows from the third point of Proposition \ref{prop:proto-exact_matroid_module} (Theorem 7.9 in \cite{Hamada26}).
    \end{proof}
\end{proposition}

Next, we investigate some other categorical properties of $\texttt{LinearSpace}_k$. We will see that $\texttt{LinearSpace}_k$ has equalizers, kernels, cokernels, and coproducts, but unlike $\texttt{Module}_k$, has no coequalizers and products in general. This is inevitable due to an equivalence with the category of matroids, which also has no coequalizers and products.

\begin{proposition}
    Let $k$ be a perfect idyll and $f,g:V \rightarrow W$ be linear maps of $k$-\linearspace{}s $V$ and $W$.
    The equalizer $\mathrm{eq}(f,g)$, kernel $\mathrm{ker}(f)$ and cokernel $\mathrm{coker}(f)$ are also $k$-\linearspace{}s.
    \begin{proof}
        Since the equalizer $\mathrm{eq}(f,g)$ and kernel $\mathrm{ker}(f)$ are strict submodules of $V$, and the cokernel is the quotient of $W$ by a strict submodule, this is a immediate corollary of Proposition \ref{prop:linear_space_subobject}.
    \end{proof}
\end{proposition}
Notably, the coequalizer is not the quotient by a strict submodule, and it does not follow that the coequalizer is a $k$-\linearspace{}. The equalizer, kernel, and cokernel satisfy the relevant universal properties in $\texttt{LinearSpace}_k$ and $\texttt{LinearSpace}^{\mathrm{f.g.}}_k$, since they do in $\texttt{Module}_k$. 

\begin{proposition}
    Let $k$ be a perfect idyll, and $\{V_i\}_{i \in I}$ a family of $k$-\linearspace{}s indexed by $I$. The direct sum $\bigoplus_{i \in I} V_i$ is a $k$-\linearspace{}.
    \begin{proof}
        We verify the vector set property. Any finite collection $\{v_j\}_{j \in E} \subset \bigoplus_{i \in I} V$ may be partitioned by the disjoint union into finite collections $\{v_j\}_{j \in E_i} \subset V_i$ for $i \in I$ where $\bigsqcup_{i \in I} E_i = E$. Since $E$ is finite, only finitely many of $E_i$'s are nonempty. By hypothesis, $\mathcal{V}(\{v_j\}_{j \in E_i}) \subset k^{E_i}$ is a $k$-vector set for all $i \in I$.

        By definition of the direct sum, if $\sum_{j \in E} a_j v_j \in N_{\bigoplus_{i \in I}V_i}$, then $\sum_{j \in E_i} a_j v_j \in N_{V_i}$ for all $i \in I$. Therefore, $\mathcal{V}(\{v_j\}_{j \in E}) = \bigoplus_{i \in I}\mathcal{V}(\{v_j\}_{j \in E_i})$ is the $k$-vector set constructed by the direct sum of $k$-vector sets $\mathcal{V}(\{v_j\}_{j \in E_i})$. This is a finite direct sum since only finitely many of $E_i$'s are nonempty, and the binary direct sum in Definition \ref{def:vector_set_direct_sum} may be extended by induction.
    \end{proof}
\end{proposition}
The direct sum satisfies the universal property of the coproduct in $\texttt{LinearSpace}_k$ since it does in $\texttt{Module}_k$. $\texttt{LinearSpace}^{\mathrm{f.g.}}_k$ only has finite direct sums.

\begin{definition}
    For any set $S$, $k\langle S \rangle := k^{\oplus S}$ is the free $k$-\linearspace{} generated by $S$.
\end{definition}
    This satisfies the universal property of free object in $\texttt{LinearSpace}_k$.
\begin{proof}
    Let $V$ be a $k$-\linearspace{}, and $f:S \rightarrow V$ a set map. For each element $e \in S$, there exists a unique linear map $k \rightarrow V$ mapping $1 \rightarrow f(e)$, and hence the universal property of the coproduct guarantees a unique map $k\langle S \rangle \rightarrow V$.
\end{proof}

\begin{proposition}
    Let $k$ be a perfect idyll and let $V$ and $W$ be $k$-\linearspace{}s. In general, $\mathrm{Hom}(V, W)$ is \textbf{not} a $k$-\linearspace{}.
    \begin{proof}
        We give a counterexample over the Krasner hyperfield $k = \mathbb{K}$. Let $V = \mathbb{K}^{\oplus 3} = \mathbb{K}\langle e_0, e_1, e_2 \rangle$

        The linear maps $\mathbb{K}^{\oplus 3} \rightarrow \mathbb{K}$ are determined by the images of $e_0, e_1, e_2$. As a result, $\mathrm{Hom}(\mathbb{K}^{\oplus 3}, \mathbb{K})$ is isomorphic to the product $\mathbb{K}^3$, which is not a $k$-\linearspace{}, as shown in Example \ref{eg:cartesian_fail}.
    \end{proof}
\end{proposition}
One special exception is when $V=k$; then $\mathrm{Hom}(k, W)$ is isomorphic to $W$ and is a $k$-\linearspace{}.

Finally, we examine the relationship between the categories $\texttt{Matroid}^{\mathrm{simple}}_\bullet$, $\texttt{Matroid}^{\mathrm{simple}}_k$, and $\texttt{LinearSpace}^{\mathrm{f.g.}}_k$. The fact that $\texttt{LinearSpace}_k$ has no coequalizers and no products in general follows from a categorical equivalence with $\texttt{Matroid}^{\mathrm{simple}}_\bullet$, since it is known that $\texttt{Matroid}^{\mathrm{simple}}_\bullet$ has no coequalizers and no products.

\begin{proposition}\label{prop:categorical_equivalence}
    The category of finitely generated $\mathbb{K}$-\linearspace{}s with linear maps ($\texttt{LinearSpace}^{\mathrm{f.g.}}_\mathbb{K}$) is equivalent to the category of simple matroids with pointed strong maps ($\texttt{Matroid}^{\mathrm{simple}}_\bullet$).
    \begin{proof}
        Since $\mathbb{K}$ is finite, every finitely generated $\mathbb{K}$-\linearspace{} is finite.
        Define a functor $F: \texttt{LinearSpace}^{\mathrm{f.g.}}_\mathbb{K} \rightarrow \texttt{Matroid}^{\mathrm{simple}}_\bullet$

        \begin{itemize}
            \item on finitely generated $\mathbb{K}$-\linearspace{}s $V$ by $F(V)$ is the matroid on $E = V \setminus 0$, where independent subsets are the linearly independent subsets in $V \setminus 0$.
            
            \item on $\mathbb{K}$-linear maps $f: V_1 \rightarrow V_2$ by $F(f): F(V_1) \rightarrow F(V_2)$ for $v \in V_1 \setminus 0$
            
            $$v \mapsto \begin{cases}
                f(v) &\text{if $f(v) \neq 0$}\\
                \bullet &\text{if $f(v) = 0$}
            \end{cases}$$
        \end{itemize}
        $F(V)$ is a well-defined matroid by Proposition \ref{prop:linear_independence_is_matroidal}. Since $F(V)$ omits $0$ and there is only one nonzero element in the $\mathbb{K}$-span of each nonzero vector, $F(V)$ is simple.
        
        It is clear that set functions $V_1 \rightarrow V_2$ which send $0 \mapsto 0$ are in bijection via $F$ with point preserving functions $F(V_1) \sqcup  \bullet \rightarrow F(V_2) \sqcup \bullet$, with $\bullet$ taking the place of $0$.

        Then for set functions $f:V_1 \rightarrow V_2$
        \begin{itemize}
            \item[\:] $f$ is a linear map
            \item[$\iff$] $f$ preserves null set $f(N_{V_1}) \subset N_{V_2}$
            \item[$\iff$] $f$ preserves the $\mathbb{K}$-vector sets $\mathcal{V}(V_1 \setminus 0) \subset \mathcal{V}(\{f(v) \mid v \in V_1 \setminus 0\})$
            \item[$\iff$] it sends cycles in $F(V_1)$ to cycles in $F(V_2)$ up to simplification by $F(f)$,
            \item[$\iff$] $F(f)$ is a strong map (by Lemma \ref{lem:strong_map_circuits}).
        \end{itemize}
        Therefore, the functor $F$ is well-defined and fully faithful.

        Given any simple matroid $M$ on ground set $E$, it is isomorphic to $F(V)$ for the $\mathbb{K}$-\linearspace{} $V$ given by
        $$V := \mathbb{K}\langle e_i \mid i \in E\rangle / \left\langle \sum_{i \in S} e_i \mid \text{$S$ is a cycle in $M$}\right\rangle$$.
        
        Since $M$ is simple, and has no cycles of size $2$ or $1$, no elements are identified when passing to the quotient, so $F(V)$ has the same number of elements as $E$, with canonical bijection $i \mapsto e_i$. By construction $\mathcal{V}\{e_i \mid i \in E\}$ is the $\mathbb{K}$-vector set indicating the cycles of $M$, so the maximal linearly independent subsets of $F(V) = \{e_i \mid i \in E\}$ are complementary to support bases of the $\mathbb{K}$-vector set, and identical to the bases of $M$. Hence the bijection $i \mapsto e_i$ is an isomorphism of matroids, and $F$ is essentially surjective.

        Since $F$ is fully faithful and essentially surjective, it is an equivalence of categories $\texttt{LinearSpace}^{\mathrm{f.g.}}_\mathbb{K} \rightarrow \texttt{Matroid}^{\mathrm{simple}}_\bullet$.
    \end{proof}
\end{proposition}

\begin{proposition}
    The $\texttt{LinearSpace}_k$ does not have all coequalizers, or products in general.
    \begin{proof}
        In \cite{HeunenPatta18}, Proposition 3.5 and 3.7, the Heunen and Patta show that $\texttt{Matroid}^{\mathrm{simple}}_\bullet$ has no coequalizers or products. Hence by the categorical equivalence in Proposition \ref{prop:categorical_equivalence}, neither does the category $\mathbb{K}$-\linearspace{}s.
    \end{proof}
\end{proposition}
The lack of products in the category of $k$-\linearspace{}s provides an explanation to why naive linear algebra in $k^n$ fails, as in Example \ref{eg:cartesian_fail}.

\begin{proposition}\label{prop:matroid_linear_space_embedding}
    Let $k$ be a perfect idyll. There is a faithful embedding of the category $\texttt{Matroid}^{\mathrm{simple}}_k$ into the category $\texttt{LinearSpace}^{\mathrm{f.g.}}_k$
    (i.e., a functor $\texttt{Matroid}^{\mathrm{simple}}_k \rightarrow \texttt{LinearSpace}^{\mathrm{f.g.}}_k$ which is faithful and essentially injective and surjective on objects).
    \begin{proof}
        Define a functor $F: \texttt{Matroid}^{\mathrm{simple}}_k \rightarrow \texttt{LinearSpace}^{\mathrm{f.g.}}_k$
        \begin{itemize}
            \item on simple $k$-vector sets $\mathcal{V} \subset k^E$ by 
            $$F(\mathcal{V}) = k\langle e_i \mid i \in E \rangle/\left\langle \sum_{i \in E} a_i e_i \mid (a_i)_{i\in E}\in \mathcal{V} \right\rangle$$
            as specified by Definition \ref{def:linear_space_specification_by_generating_set}. By construction, $\{e_i\}_{i \in E}$ is a finite generating set for $F(\mathcal{V})$ and $\mathcal{V} = (\{e_i\}_{i \in E})^{\perp}$.
            
            \item on morphisms of $k$-vector sets $f:\mathcal{V}_1\subset k^{E_1} \rightarrow \mathcal{V}_2 \subset k^{E_2}$ by $F(f): F(\mathcal{V}_1) \rightarrow F(\mathcal{V}_2)$ which maps, for $i \in E_1$
            $$e_i \mapsto \begin{cases}
                f_{ji}e_j & \text{$f_{ji}$ is the nonzero entry in the $i$-th column}\\
                0 & \text{if the $i$-th column is zero}
            \end{cases}$$
            
            Since $\mathcal{V}$ is simple, no elements of $k\langle e_i \mid i \in E \rangle$ are identified when passing to the quotient, and $F(f)$, defined on generators, extends uniquely to a map which is compatible with scalar multiplication.

            If $\sum a_ie_i \in N_{F(\mathcal{V}_1)}$, 
            then $(a_i)_{i \in E_1} \in \mathcal{V}_1$ and $(f_{ji}a_i)_{j \in E_2} =f((a_i)_{i \in E_1})\in f(\mathcal{V}_1) \subset \mathcal{V}_2$ so $\sum F(f)(a_ie_i) = \sum f_{ji}a_i e_j \in N_{F(\mathcal{V}_1)}$, so $F(f)$ is a well defined linear map.
            
            Since the $F(f)$ is uniquely specified by the value on the generators $e_i$, it is uniquely specified by the submonomial matrix for $f$, and $F$ is faithful.
            
            For any finitely generated $k$-\linearspace{} $V$, there exists a finite generating set $\{e_i\}_{i \in E}$ so that the $e_i$'s are nonzero and pairwise non-parallel. Then $\mathcal{V}=\mathcal{V}(\{e_i\}_{i \in E}) \subset k^E$ is a simple $k$-vector set, with $V \cong F(\mathcal{V})$. Therefore, $F$ is essentially surjective.
            
            For any simple $k$-vector sets $\mathcal{V}_1 \subset k^{E_1}$, $\mathcal{V}_2 \subset k^{E_2}$, suppose there is an isomorphism $\iota: F(\mathcal{V}_1) \rightarrow F(\mathcal{V}_2)$. The generators $e_i$ may be transported along the isomorphism, and are nonzero and pairwise nonparallel since $\mathcal{V}_1$ and $\mathcal{V}_2$ are simple. By Proposition \ref{prop:generator_isomorphism}, the vector sets are isomorphic, $\mathcal{V}_1 \cong \mathcal{V}_2$, and the functor $F$ is essentially injective.
        \end{itemize}
    \end{proof}
\end{proposition}
The embedding functor $F:\texttt{Matroid}^{\mathrm{simple}}_k \rightarrow \texttt{LinearSpace}^{\mathrm{f.g.}}_k$ is not full. Since submonomial matrices have at most one nonzero entry per column, the essential image contains only the linear maps $f:V_1 \rightarrow V_2$ for which, for each nonzero $v_2 \in V_2$, there exists at most one $v_1 \in V_1$ with $f(v_1) = v_2$.

The following diagram summarizes the faithful embeddings of categories; the solid arrows indicate full subcategories, the dashed arrows indicate embeddings which are not full, and $\simeq$ indicates categorical equivalence: 

\[\begin{tikzcd}
	{\texttt{Module}_k} & {\texttt{Module}_\mathbb{K}} && \\
	{\texttt{LinearSpace}_k} & {\texttt{LinearSpace}_\mathbb{K}} \\
	{\texttt{LinearSpace}^{\mathrm{f.g.}}_k} & {\texttt{LinearSpace}^{\mathrm{f.g.}}_\mathbb{K}} & {\texttt{Matroid}^{\mathrm{simple}}_\bullet} & {\texttt{Matroid}_\bullet} \\
	&& {\texttt{Matroid}^{\mathrm{simple}}_\mathbb{K}} & {\texttt{Matroid}_\mathbb{K}} \\
	&& {\texttt{Matroid}^{\mathrm{simple}}_k} & {\texttt{Matroid}_k}
	\arrow[hook, from=2-1, to=1-1]
	\arrow[hook, from=2-2, to=1-2]
	\arrow[hook, from=3-1, to=2-1]
	\arrow[hook, from=3-2, to=2-2]
	\arrow["\simeq", no head, from=3-2, to=3-3]
	\arrow[hook, from=3-3, to=3-4]
	\arrow[dotted, hook, from=4-3, to=3-2]
	\arrow[dotted, hook, from=4-3, to=3-3]
	\arrow[hook, from=4-3, to=4-4]
	\arrow[dotted, hook, from=4-4, to=3-4]
	\arrow[dotted, hook, from=5-3, to=3-1]
	\arrow[hook, from=5-3, to=5-4]
\end{tikzcd}\]

Lastly, we show that all $k$-vector sets (not only simple ones) are realized by vector arrangements in $k$-\linearspace{}s, as long as the arrangement is allowed to contain duplicates and zero.
\begin{definition}
	Let $k$ be a perfect idyll, $E$ a finite set, $\mathcal{V} \subset k^E$ be a $k$-vector set, $V$ be a $k$-\linearspace{}, and $\{e_i\}_{i \in E}$ a finite collection of elements $e_i \in V$. We call $\{e_i\}_{i \in E}$ a \textbf{vector arrangement} and say that $\mathcal{V}$ is \textbf{realized} by or \textbf{represented} by $\{e_i\}_{i \in E}$ if $\mathcal{V} = \mathcal{V}(\{e_i\}_{i \in E})$.
\end{definition}

\begin{lemma}\label{lem:realize_rescaling}
	Let $k$ be a perfect idyll and $V$ be a $k$-\linearspace{}. Suppose that a $k$-vector set $\mathcal{V}\subset k^E$ is realized by $\{e_i\}_{i \in E}$. Let $\lambda = (\lambda_i)_{i \in E}$ be a set of nonzero non-isotropic scaling constants. Then $\lambda(\mathcal{V})$ is realized by $\{\lambda_i^{-1} \cdot e_i\}_{i \in E}$.

    \begin{proof}
    The following are equivalent by definition:
    \begin{align*}
    (a_i)_{i \in E} &\in \mathcal{V}(\{e_i\}_{i \in E})\\
    \sum a_i \lambda_i \lambda^{-1}_i e_i = \sum a_i e_i &\in N_V\\
    (\lambda_i a_i)_{i \in E} &\in \mathcal{V}(\{\lambda^{-1}_i e_i\}_{i \in E})
    \end{align*}
    Therefore, $\lambda(\mathcal{V})$ is realized by $\{\lambda_i^{-1} \cdot e_i\}_{i \in E}$.
    \end{proof}
\end{lemma}

\begin{proposition}
	Let $k$ be a perfect idyll. All $k$-vector sets are realized by a finite collection of elements in a $k$-\linearspace{}.

    \begin{proof}
    	Each $k$-vector set $\mathcal{V} \subset k^E$ is isomorphic via non-isotropic scaling to a $k$-vector set $\mathcal{V}' \subset k^E$ in which all parallel elements are duplicate. Let $P$ be the non-loop parallel classes, and $L \subset E$ be the set of loops. Together, the subsets $P \cup \{L\}$ partition $E$.
        By symmetry $\mathcal{V}'$ has a unique simplification $\mathrm{simp}(\mathcal{V}') \subset k^P$.
    
        The simplification $\mathrm{simp}(\mathcal{V}')$ is realized by the generators $\{e_i\}_{i \in P}$ in the \linearspace{} 
        $$V = k\langle e_i \mid i \in P \rangle/\left\langle \sum_{i \in P} a_i e_i \mid (a_i)_{i \in P} \in \mathrm{simp}(\mathcal{V}') \right\rangle.$$
        i.e., the one given by the embedding $\texttt{Matroid}^{\mathrm{simple}}_k \rightarrow \texttt{LinearSpace}^{\mathrm{f.g.}}_k$ in Proposition \ref{prop:matroid_linear_space_embedding}.
    
        Then $\mathcal{V}'$ is also realized in $V$ by $$\{e_{[i]}\}_{i \in E \setminus L} \cup \{0\}_{i \in L}$$ where $[i]$ indicates the parallel class of $i$. Thus, the element $e_{[i]}$ appears duplicated up to the number of elements in its parallel class and $0$ appears once for each loop.
        
        Then $\mathcal{V}$ is realizable by Lemma \ref{lem:realize_rescaling}, since it is isomorphic to $\mathcal{V}'$ by rescaling.
    \end{proof}
\end{proposition}

\bibliographystyle{plain} 
\bibliography{refs}

@article{BakerJinLorscheid25,
author = {Baker, Matthew and Jin, Tong and Lorscheid, Oliver},
title = {New building blocks for $\mathbb{F}_1$-geometry: Bands and band schemes},
journal = {Journal of the London Mathematical Society},
volume = {111},
number = {4},
pages = {e70125},
doi = {https://doi.org/10.1112/jlms.70125},
url = {https://londmathsoc.onlinelibrary.wiley.com/doi/abs/10.1112/jlms.70125},
eprint = {https://londmathsoc.onlinelibrary.wiley.com/doi/pdf/10.1112/jlms.70125},
year = {2025}
}

@article{BakerLorscheid21,
title = {The moduli space of matroids},
journal = {Advances in Mathematics},
volume = {390},
pages = {107883},
year = {2021},
issn = {0001-8708},
doi = {https://doi.org/10.1016/j.aim.2021.107883},
url = {https://www.sciencedirect.com/science/article/pii/S0001870821003224},
author = {Matthew Baker and Oliver Lorscheid}
}

@article{BakerBowler19,
title = {Matroids over partial hyperstructures},
journal = {Advances in Mathematics},
volume = {343},
pages = {821-863},
year = {2019},
issn = {0001-8708},
doi = {https://doi.org/10.1016/j.aim.2018.12.004},
url = {https://www.sciencedirect.com/science/article/pii/S0001870818304961},
author = {Matthew Baker and Nathan Bowler}
}

@article{DressWenzel92,
title = {Valuated matroids},
journal = {Advances in Mathematics},
volume = {93},
number = {2},
pages = {214-250},
year = {1992},
issn = {0001-8708},
doi = {https://doi.org/10.1016/0001-8708(92)90028-J},
url = {https://www.sciencedirect.com/science/article/pii/000187089290028J},
author = {Andreas W.M Dress and Walter Wenzel}
}

@article{BlandVergnas78,
title = {Orientability of matroids},
journal = {Journal of Combinatorial Theory, Series B},
volume = {24},
number = {1},
pages = {94-123},
year = {1978},
issn = {0095-8956},
doi = {https://doi.org/10.1016/0095-8956(78)90080-1},
url = {https://www.sciencedirect.com/science/article/pii/0095895678900801},
author = {Robert G Bland and Michel {Las Vergnas}}
}

@article{Anderson19,
title = {Vectors of matroids over tracts},
journal = {Journal of Combinatorial Theory, Series A},
volume = {161},
pages = {236-270},
year = {2019},
issn = {0097-3165},
doi = {https://doi.org/10.1016/j.jcta.2018.08.002},
url = {https://www.sciencedirect.com/science/article/pii/S0097316518301109},
author = {Laura Anderson}
}

@article{HeunenPatta18, title={The Category of Matroids}, volume={26}, DOI={https://doi.org/10.1007/s10485-017-9490-2}, number={3}, journal={Applied Categorical Structures}, author={Heunen, Chris and Patta, Vaia}, year={2018}, pages={205–237}}

@book{White86, place={Cambridge}, series={Encyclopedia of Mathematics and its Applications}, title={Theory of Matroids}, editor={White, Neil}, publisher={Cambridge University Press}, year={1986}, collection={Encyclopedia of Mathematics and its Applications}}

@phdthesis{Eppolito22,
    author = {Eppolito, Chris},
    title = {Matroids: Mystic Monoliths, Meta Missiles, and Myopic Meadows},
    school = {State University of New York at Binghamton},
    year = {2022}
}

@misc{JunSistkoWright25,
      title={Proto-Exact Categories of Matroids over Idylls and Tropical Toric Reflexive Sheaves}, 
      author={Jaiung Jun and Alex Sistko and Cameron Wright},
      year={2025},
      eprint={2509.08144},
      archivePrefix={arXiv},
      primaryClass={math.CT},
      url={https://arxiv.org/abs/2509.08144}, 
}

@misc{JarraLorscheidVital26,
      title={Quiver matroids -- Matroid morphisms, quiver Grassmannians, their Euler characteristics and $\mathbb{F}_1$-points}, 
      author={Manoel Jarra and Oliver Lorscheid and Eduardo Vital},
      year={2026},
      eprint={2404.09255},
      archivePrefix={arXiv},
      primaryClass={math.CO},
      url={https://arxiv.org/abs/2404.09255}, 
}

@Inbook{DyckerhoffKapranov19,
    author="Dyckerhoff, Tobias
    and Kapranov, Mikhail",
    title="Topological 1-Segal and 2-Segal Spaces",
    bookTitle="Higher Segal Spaces",
    year="2019",
    publisher="Springer International Publishing",
    address="Cham",
    pages="9--30",
    isbn="978-3-030-27124-4",
    doi="10.1007/978-3-030-27124-4_2",
    url="https://doi.org/10.1007/978-3-030-27124-4_2"
}

@article{EppolitoJunSzczensny20, title={Proto-exact categories of matroids, Hall algebras, and K-theory}, volume={296}, DOI={https://doi.org/10.1007/s00209-019-02429-z}, number={1}, journal={Mathematische Zeitschrift}, author={Eppolito, Chris and Jun, Jaiung and Szczesny, Matt}, year={2020}, pages={147–167}}

@misc{Hamada26,
      title={On the category of modules over bands: relative schemes, hyperring schemes and proto-exactness}, 
      author={Lucas Hamada},
      year={2026},
      eprint={2606.01093},
      archivePrefix={arXiv},
      primaryClass={math.AG},
      url={https://arxiv.org/abs/2606.01093}, 
}
\end{document}